\documentclass[11pt]{article}%
\usepackage{float} 
\usepackage{fullpage}
\usepackage{amsfonts}
\usepackage{amsmath}
\usepackage{amssymb}
\usepackage{amsbsy}
\usepackage{amsthm}
\usepackage{mathtools}
\usepackage{epsfig}
\usepackage{graphicx}
\usepackage{colordvi}
\usepackage{graphics}
\usepackage{color}
\usepackage{bm}
\usepackage{verbatim} 
\numberwithin{equation}{section}

\newtheorem{thm}{Theorem}[section]

\newtheorem{lemma}[thm]{Lemma}
\newtheorem{dfn}[thm]{Definition}

\newtheorem{alg}[thm]{Algorithm}

\newtheorem{assumption}[thm]{Assumption}



\begin{document}

\title{Superlinear convergence of Anderson accelerated Newton's method for solving stationary Navier-Stokes equations}

\author{
Mengying Xiao 
\thanks{Department of Mathematics and statistics, University of West Florida, Pensacosa, FL 32514 (mxiao@uwf.edu).}
}
\date{}

\maketitle

\begin{abstract}
This paper studies the performance Newton's iteration applied with Anderson acceleration for solving the incompressible steady Navier-Stokes equations. We manifest that this method converges superlinearly with a good initial guess, and moreover, a large Anderson depth decelerates the convergence speed comparing to a small Anderson depth. We observe that the numerical tests confirm these analytical convergence results, and in addition, Anderson acceleration sometimes enlarges the domain of convergence for Newton's method.
\end{abstract}

\section{Introduction}
This work studies the performance of Newton's method with an acceleration technique, known as Anderson acceleration introduced by \cite{Anderson65}, for solving the incompressible Navier-Stokes equations. It is inspired by the work of \cite{PRX19, EPRX20, PS20}, where \cite{PRX19} shows how Anderson acceleration can locally improve the convergence rate of the linearly converging Picard iteration for solving the NSE, the numerical tests from \cite{EPRX20} shows that Anderson acceleration slows the convergence speed of Newton's method but enlarges the domain of convergence for NSE, and \cite{PS20} justifies, both theoretically and numerically, superlinear convergence of this method with depth $m=1$ for several benchmark nonlinear problems. Our work manifests the superlinear convergence of Anderson accelerated Newton's method (AAN) for solving steady Navier-Stokes problem analytically and numerically.

Navier-Stokes equations (NSE) are governed by the following:
\begin{align}
 - \nu \Delta u + u\cdot \nabla u + \nabla p & = f, \label{nse1t}\\
\nabla \cdot u & = 0, \label{nse2t}
\end{align}
 on a domain $\Omega\subset \mathbb{R}^d\ (d= 2,\ 3)$,
with appropriate boundary conditions.  Here $u$ is velocity, $p$ is pressure, $f$ is external force such as buoyancy, gravity, etc. $\nu$ is the kinetic viscosity, where its reciprocal is known as Reynolds number $Re$. 
The Newton's method of NSE takes the form: Given $u_0$, find $(u_k, p_k)$ satisfying 
\begin{align*}
-\nu \Delta u_k + u_k \nabla \cdot u_{k-1}  + u_{k-1} \nabla \cdot u_k - u_{k-1}\nabla \cdot u_{k-1} + \nabla p_k & = f,\\
\nabla \cdot u_k & = 0.
\end{align*}
It is known that this iterative method is a local method and converges quadratically if the initial guess is good enough. Our work herein demonstrates the Newton's method applied with Anderson acceleration with general depth $m$ for solving NSE converges superlinearly both theoretically and numerically, if the initial guess is good enough. Moreover, a large depth decelerates the convergence speed due to more high order terms appeared in the one-step bound, comparing to a small depth.

This paper is organized as follows. In section 2, we provide notations, mathematical preliminary, finite element scheme for steady NSE, and then present some properties of the solution operator to Newton's method. Section 3 gives the algorithms and convergence results of Anderson accelerated Newton's method with varying depth for solving steady NSE. Several numerical tests are provided in section 4 that confirm our analytical results.

\section{Notation and Mathematical preliminaries}
This section provides notation, mathematical preliminaries, and background to allow for a smooth analysis in later sections. First, we will give function spaces and notational details, followed by finite element discretization preliminaries. Then we provide the Newton's iteration of steady Navier-Stokes equations and some basic properties. Throughout this paper, we consider homogeneous Dirichlet boundary condition for velocity $$ u = 0 \text{ on } \partial\Omega.$$  Of course, all results can be extended to other common boundary conditions with some extra work and these are not discussed here.
\subsection{Discretization of NSE}
The domain $\Omega\subset \mathbb{R}^d$ ($d = 2, 3$) is assumed to be simply connected and to either be a convex polytope or have a smooth boundary . The $L^2(\Omega)$ norm and inner product will be denoted by $\| \cdot \|$ and $(\cdot , \cdot)$, respectively, and all other norms will be labeled with subscripts. The natural function spaces for velocity and pressure are given by 
\begin{align*}
X& =  H_0^1(\Omega)^d : = \{ v\in L^2(\Omega)^d, \ \nabla v \in L^2(\Omega)^{d\times d}, v = 0 \text{ on } \partial\Omega\},\\
Q & = L_0^2(\Omega) :=\{ q\in L^2(\Omega), \int_\Omega q\  dx = 0 \}.
\end{align*}
The Poincar\'e inequality is known to hold in $X$ \cite{laytonbook}: there exists $C_P>0$ dependent only on the domain $\Omega$ satisfying 
$$ \| v\| \le C_P \| \nabla v\|,$$
for any $v\in X$.
\begin{dfn}
Define a trilinear form: $b:X\times X \times X \to \mathbb{R}$ such that for any $u,v,w\in X$
$$ b(u,v,w):= \frac12 ( (u\cdot \nabla v,w) - (u\cdot \nabla w, v)).$$
\end{dfn}
The operator $b$ is skew-symmetric
\begin{align}
b(u,v,v) = 0,
\label{bss}
\end{align}
and satisfies inequality
\begin{align}
b(u,v,w) \le M \|\nabla u\| \|\nabla v\| \|\nabla w\|,
\label{bbd}
\end{align}
for any $u,v,w\in X$, with $M$ depending only on $|\Omega|$, see \cite{laytonbook}.

We will denote by $\tau_h$ a regular, conforming triangulation of $\Omega$ with maximum element diameter $h$. The finite element spaces will be denoted as $X_h \subset X, \ Q_h\subset Q$, and we require that $(X_h,Q_h)$ pair satisfies the usual discrete inf-sup condition \cite{laytonbook}. For example one could select Taylor-Hood elements, 
Scott-Vogelius elements on an appropriate mesh 
\cite{arnold:qin:scott:vogelius:2D,MR2519595,zhang:scott:vogelius:3D}, 
or the mini element \cite{ABF84}, etc.

The discrete stationary NSE is given by: Finding $(u,p) \in (X_h, Q_h)$  such that  for and $(v,q)\in (X_h, Q_h)$
\begin{align}
 b(u,u,v) + \nu(\nabla u, \nabla v) - (p, \nabla \cdot v) & = (f,v), \label{nse1} \\
 (\nabla \cdot u, q) & = 0. \label{nse2} 
\end{align}
Let $V_h$ be the discretely divergence-free subspace as  
$$V_h := \{ v\in X_h,  (\nabla \cdot v, q) = 0, \ \forall q\in Q_h\},$$
then an equivalent formulation of \eqref{nse1}-\eqref{nse2} is obtained: Find $u\in V_h$  such that  for any $v\in V_h$ \begin{align}
b(u,u,v) + \nu (\nabla u, \nabla v) & = (f,v) . \label{nse3}
\end{align}
This paper focuses on studying the convergence behavior after applying Anderson acceleration to the Newton's method.  We assume that systems \eqref{nse1}-\eqref{nse2} and \eqref{nse3} are well-posed for simplicity. In other words, the small data condition 
\begin{align}
\label{smalldata}
\kappa : = \nu^{-2}M\|f\|_{-1}<1,
\end{align} is satisfied, see \cite{laytonbook}.  
However, all results presented here can be extended to the case where the discretized steady NSE has distinct solutions ($\kappa \ge1$). With deflation techniques \cite{BG71,FBF15}, one may find out distinct solutions to steady NSE model using nonlinear solvers. For the rest of the paper, we assume \eqref{smalldata} holds.

\subsection{Properties of Newton's solution operator for steady NSE}
In this subsection, we define the Newton's solution operator for steady NSE and present some properties of it. 
\begin{dfn} Given $u\in V_h$, define a mapping $G: V_h \to V_h$ satisfying \begin{align}
\label{eqn:newt1} 
b(u, G(u), v) + b(G(u), u,v) - b(u,u,v) + \nu (\nabla G(u), \nabla v) & = (f,v),
\end{align}
for any $v\in V_h$ and called $G$ the Newton solution operator.
\end{dfn}
Next lemma shows that this operator $G$ is well-defined on a ball. 
\begin{lemma}
\label{lemma:range}
Equation \eqref{eqn:newt1} has a unique solution for any $u\in B(0,R):= \{v\in  V_h\mid \|\nabla v\| \le R\}$ with $R< \nu M^{-1}.$ Moreover, the following inequalities hold
\begin{align}
\label{eqn:gubd}
\|\nabla G(u)\| \le   C_0R^2 + (1+C_0R)\nu^{-1}\|f\|_{-1} : = R_G,
\end{align}
\begin{align}
\label{eqn:guubd}
\|\nabla (G(u)-u)\| \le 2(1+C_0R)^{1/2}R + 2\nu^{-1}(1+C_0R)\|f\|_{-1} := R_{res},
\end{align}
where $C_0 : = \frac{\nu^{-1}M}{1-\nu^{-1}MR}.$
\end{lemma}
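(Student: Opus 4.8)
The plan is to treat \eqref{eqn:newt1}, for fixed $u\in B(0,R)$, as a linear problem for the unknown $w:=G(u)$ and to establish well-posedness through coercivity. Writing the left-hand side as the bilinear form $a(w,v):=b(u,w,v)+b(w,u,v)+\nu(\nabla w,\nabla v)$ and the data as the functional $\ell(v):=(f,v)+b(u,u,v)$, I would first test with $v=w$. The skew-symmetry \eqref{bss} annihilates $b(u,w,w)$, so $a(w,w)=b(w,u,w)+\nu\|\nabla w\|^2$, and \eqref{bbd} together with $\|\nabla u\|\le R$ gives $a(w,w)\ge(\nu-MR)\|\nabla w\|^2$. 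Since $R<\nu M^{-1}$ this is strictly positive, so $a$ is coercive; as the problem is posed on the finite-dimensional space $V_h$, coercivity forces injectivity and hence a unique solution (equivalently, one invokes Lax--Milgram). This settles existence and uniqueness on $B(0,R)$.

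For \eqref{eqn:gubd} I would test the defining identity \eqref{eqn:newt1} directly with $v=G(u)=w$. Again $b(u,w,w)=0$, leaving $\nu\|\nabla w\|^2=(f,w)-b(w,u,w)+b(u,u,w)$. Bounding the right-hand side with the dual norm of $f$, with \eqref{bbd}, and with $\|\nabla u\|\le R$ yields $\nu\|\nabla w\|^2\le \|f\|_{-1}\|\nabla w\|+MR\|\nabla w\|^2+MR^2\|\nabla w\|$. Moving the $MR\|\nabla w\|^2$ term to the left produces $(\nu-MR)\|\nabla w\|^2\le(\|f\|_{-1}+MR^2)\|\nabla w\|$, and dividing by $\|\nabla w\|$ gives $\|\nabla w\|\le(\|f\|_{-1}+MR^2)/(\nu-MR)$. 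The identity $(\nu-MR)^{-1}=(1+C_0R)\nu^{-1}$ then rewrites this exactly as $C_0R^2+(1+C_0R)\nu^{-1}\|f\|_{-1}=R_G$, so \eqref{eqn:gubd} should emerge as an equality-type estimate carrying essentially no slack.

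For the residual bound \eqref{eqn:guubd} I would set $e:=G(u)-u$ and substitute $G(u)=u+e$ into \eqref{eqn:newt1}. The constant trilinear terms collapse (two copies of $b(u,u,v)$ minus one), leaving the linear-in-$e$ identity $b(u,e,v)+b(e,u,v)+\nu(\nabla e,\nabla v)=(f,v)-b(u,u,v)-\nu(\nabla u,\nabla v)$, whose right-hand side is precisely the Navier--Stokes residual evaluated at $u$. Testing with $v=e$, using \eqref{bss} to drop $b(u,e,e)$ and \eqref{bbd} to control $b(e,u,e)$, and absorbing the resulting $MR\|\nabla e\|^2$ term on the left, I obtain $(\nu-MR)\|\nabla e\|^2\le\bigl(\|f\|_{-1}+MR^2+\nu R\bigr)\|\nabla e\|$. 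I expect the main difficulty here to be cosmetic rather than structural: the immediate consequence of this inequality is $\|\nabla e\|\le(1+C_0R)\nu^{-1}\|f\|_{-1}+C_0R^2+(1+C_0R)R$, whereas casting it into the stated form $R_{res}=2(1+C_0R)^{1/2}R+2(1+C_0R)\nu^{-1}\|f\|_{-1}$ calls for splitting the right-hand side with Young's inequality into separate $R$-dependent and $\|f\|_{-1}$-dependent squares and then applying $\sqrt{a+b}\le\sqrt a+\sqrt b$. The delicate point is choosing the Young parameters so that the force term and the $R$-dependent term decouple into exactly these two summands; I would therefore carry out the coercive energy estimate first and defer this reshaping to the very end.
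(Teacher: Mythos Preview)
Your treatment of well-posedness and of the bound \eqref{eqn:gubd} is exactly what the paper does: test \eqref{eqn:newt1} with $v=G(u)$, kill $b(u,G(u),G(u))$ by skew-symmetry, absorb $b(G(u),u,G(u))$ into the left-hand side, and divide.

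For \eqref{eqn:guubd}, however, your plan diverges from the paper in a way that is not merely cosmetic. The paper also tests \eqref{eqn:newt1} with $v=G(u)-u$, but it does \emph{not} rearrange into your residual form and then apply Cauchy--Schwarz to $\nu(\nabla u,\nabla e)$. Instead it leaves the cross term $\nu(\nabla G(u),\nabla(G(u)-u))$ intact and applies the polarization identity
\[
\nu(\nabla G(u),\nabla(G(u)-u))=\tfrac{\nu}{2}\bigl(\|\nabla G(u)\|^2+\|\nabla(G(u)-u)\|^2-\|\nabla u\|^2\bigr),
\]
so that $\tfrac{\nu}{2}\|\nabla u\|^2$ lands on the right as a \emph{constant}, not as a term linear in $\|\nabla e\|$. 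The trilinear term $b(G(u),u,G(u)-u)$ is bounded by $\tfrac{MR}{2}(\|\nabla G(u)\|^2+\|\nabla e\|^2)$ and absorbed on the left; the extra positive term $\|\nabla G(u)\|^2$ is then simply dropped. Young's inequality is applied only to the single product $\|f\|_{-1}\|\nabla e\|$, and this is precisely what produces the factor $(1+C_0R)^{1/2}$ in front of $R$.

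Your route instead yields $\|\nabla e\|\le (1+C_0R)\nu^{-1}\|f\|_{-1}+(1+2C_0R)R$. This is a perfectly valid bound and would serve just as well downstream, where $R_{res}$ is only used as a generic constant. But it cannot be massaged into the stated $R_{res}$: writing $t=\nu^{-1}MR\in[0,1)$, one has $(1+2C_0R)=\tfrac{1+t}{1-t}$ while $2(1+C_0R)^{1/2}=\tfrac{2}{\sqrt{1-t}}$, and the former exceeds the latter once $t>2\sqrt{3}-3\approx 0.464$. So the ``reshaping'' you defer to the end is not achievable from your estimate; the polarization step is the structural ingredient you are missing if you want the exact constant in \eqref{eqn:guubd}.
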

\begin{proof}
We begin the proof by finding an upper bound of $\|\nabla G(u)\|.$ Setting $v = G(u)$ in \eqref{eqn:newt1} eliminates the first trilinear form and yields 
\begin{align*}
\nu \| \nabla G(u)\|^2 & = - b(G(u), u, G(u)) + b(u, u,G(u)) + (f, G(u))\\
& \le M \|\nabla G(u)\|^2\| \nabla u\| + M \|\nabla u\|^2 \|\nabla G(u)\| + \|f\|_{-1}\|\nabla G(u)\|,
\end{align*}
thanks to \eqref{bss}, \eqref{bbd} and Cauchy-Schwarz inequality. Dividing both sides by $\|\nabla G(u)\|$, it reduces to 
\begin{align*}
\nu (1-\nu^{-1}M\|\nabla u\|)\|\nabla G(u)\| \le M\|\nabla u\|^2 + \|f\|_{-1}.
\end{align*}
Thus from the assumptions $\|\nabla u\| \le R< \nu M^{-1}$ and \eqref{smalldata}, we have \eqref{eqn:gubd}.
Since the system \eqref{eqn:newt1} is linear and finite dimensional, \eqref{eqn:gubd} is sufficient to imply solution uniqueness and therefore existence. 

Then we show inequality \eqref{eqn:guubd}. Setting $v = G(u)-u$ in equation \eqref{eqn:newt1} gives
\begin{align*}
b(G(u),u, G(u)-u) + \nu(\nabla G(u), \nabla (G(u)-u)) = (f,G(u)-u),
\end{align*}
thanks to \eqref{bss}. 
Applying the polarization identity, \eqref{bbd} and Cauchy-Schwarz inequality, we have
\begin{align*}
\frac{\nu}2(1-\nu^{-1}MR) \left( \|\nabla G(u)\|^2 + \|\nabla (G(u)-u)\|^2 \right) \le \frac{\nu}2 \|\nabla u\|^2+  \|f\|_{-1}\|\nabla (G(u)-u)\|,
\end{align*}
Dropping the term with $\|\nabla G(u)\|^2$ and utilizing the Young's inequality, we obtain 
\begin{align*}
\|\nabla (G(u)-u)\|^2 \le 2(1-\nu^{-1}MR)^{-1}\|\nabla u\|^2 + 4 \nu^{-2}(1-\nu^{-1}MR)^{-2}\|f\|_{-1}^2.
\end{align*}
From the definition of $C_0$ and identity $(1 -\nu^{-1}MR)^{-1} = 1+C_0R$, \eqref{eqn:guubd} is achieved.
\end{proof}
Now we state the Newton's iteration for steady NSE and a few  properties of $G$ are followed. 
\begin{alg}[Newton's iteration for steady NSE]
\label{alg:newt}
The Newotn's method for steady Navier-Stokes equations is as below:
\begin{enumerate}
\item[Step 0] Give $w_0\in B(0,R)$.
\item[Step k] Compute $w_k = G(w_{k-1})$.
\end{enumerate}
\end{alg}
Obviously, Algorithm \ref{alg:newt} fails when ever $w_k \not\in B(0,R)$ for some integer $k$. In order to discuss its convergence order, we make an assumption:
%
\begin{assumption}
\label{assump00}
Assume the sequence $\{w_k\}$ from Algorithm \ref{alg:newt} satisfies $$w_k \in B(0,R):= \{v\in  V_h\mid \|\nabla v\| \le R\}$$ for all $k \in \mathbb{N}$, where $R < \nu M^{-1}$.
\end{assumption}
It is well-known that Algorithm \ref{alg:newt} converges quadratically \cite{GR86}. Here we present a different point of view to the quadratical convergence, expressing the error bound in terms of residuals $G(u) -u$, the following lemma will be used multiple times in the next section.
\begin{lemma}
\label{lemma:newtquad}
Assume $u,w \in B(0,R)$ with $R< \nu M^{-1}$, thus 
\begin{align}
\|\nabla (G(w)-G(u))\|  & \le 2C_0\|\nabla (w-u)\|\|\nabla (G(u)-u)\| + C_0\|\nabla (w-u)\|^2.
\label{eqn:errbd}
\end{align}
where $C_0$ is defined in Lemma \ref{lemma:range}.
\end{lemma}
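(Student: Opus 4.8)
The plan is to derive and then test the linear equation satisfied by the difference $e := G(w) - G(u)$. First I would write \eqref{eqn:newt1} for $G(w)$ and for $G(u)$ against an arbitrary $v \in V_h$ and subtract the two. Substituting $w = u + (w-u)$ and $G(w) = G(u) + e$ and using that $b$ is linear in each of its three slots, every product expands; after cancelling the shared copies of $b(u,G(u),v)$, $b(G(u),u,v)$ and $b(u,u,v)$, I expect the residual identity to regroup into terms pairing $e$ with $u$ or with $w-u$, the two mixed terms $b(w-u,\,G(u)-u,\,v)$ and $b(G(u)-u,\,w-u,\,v)$, a purely quadratic piece $-b(w-u,w-u,v)$, and the viscous term $\nu(\nabla e,\nabla v)$.

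The next step is to test with $v = e$. Skew-symmetry \eqref{bss} annihilates $b(u,e,e)$ and $b(w-u,e,e)$, leaving the two terms $b(e,u,e)$ and $b(e,w-u,e)$ that are quadratic in $e$. The crucial observation — and the step I expect to carry the whole argument — is that these two must be recombined using linearity in the middle slot: $b(e,u,e) + b(e,w-u,e) = b(e,w,e)$. This is exactly what lets me bound the coefficient of $\|\nabla e\|^2$ by $M\|\nabla w\| \le MR$ via \eqref{bbd} together with $w \in B(0,R)$, rather than by a quantity involving $\|\nabla(w-u)\|$; absorbing this onto the left is what produces the factor $\nu(1-\nu^{-1}MR)$ and hence the constant $C_0$.

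With that recombination in hand, the remaining terms $b(w-u,\,G(u)-u,\,e)$, $b(G(u)-u,\,w-u,\,e)$ and $b(w-u,w-u,e)$ are each estimated by \eqref{bbd} and carry a single factor $\|\nabla e\|$. Collecting everything gives $(\nu-MR)\|\nabla e\|^2 \le \big(2M\|\nabla(w-u)\|\,\|\nabla(G(u)-u)\| + M\|\nabla(w-u)\|^2\big)\|\nabla e\|$. Dividing by $\|\nabla e\|$ (the inequality being trivial when $e=0$) and then by $\nu - MR = \nu(1-\nu^{-1}MR)$ yields \eqref{eqn:errbd} once one recognizes $C_0 = \nu^{-1}M/(1-\nu^{-1}MR)$.

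The only genuine subtlety is the sign and grouping bookkeeping in the subtraction and, above all, spotting the collapse $b(e,u,e)+b(e,w-u,e)=b(e,w,e)$: a naive bound that keeps the two terms apart would leave a coefficient $\nu - MR - M\|\nabla(w-u)\|$ on the left, which is both weaker and does not match the stated constant. Everything else is a routine application of \eqref{bss}, \eqref{bbd}, and division.
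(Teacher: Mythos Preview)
Your proposal is correct and follows essentially the same route as the paper: subtract the defining equations for $G(w)$ and $G(u)$, test with $e=G(w)-G(u)$, kill the first-slot terms by skew-symmetry, absorb $b(e,w,e)$ via $\|\nabla w\|\le R$ to produce the factor $\nu(1-\nu^{-1}MR)$, and bound the remaining trilinear terms by \eqref{bbd}. The only cosmetic difference is that the paper keeps the combination $b(G(u)-w,\,w-u,\,v)$ intact---obtaining the intermediate estimate $\|\nabla e\|\le C_0\|\nabla(w-u)\|\big(\|\nabla(G(u)-u)\|+\|\nabla(G(u)-w)\|\big)$ and then applying the triangle inequality---whereas you split it as $b(G(u)-u,\,w-u,\,v)-b(w-u,\,w-u,\,v)$ and land directly on \eqref{eqn:errbd}; the ``recombination'' $b(e,u,e)+b(e,w-u,e)=b(e,w,e)$ that you flag as subtle is in fact automatic in the paper's grouping, since $b(e,w,v)$ appears there from the outset.
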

\begin{proof}
For any $u,w\in B(0,R)$,we  rewrite equation \eqref{eqn:newt1} as 
\begin{align*}
b(u,G(u),v) + b(G(u)-u, u, v) + \nu (\nabla G(u),\nabla v) & = (f,v) ,  \\
b(w, G(w),v) + b(G(w)-w, w,v) + \nu (\nabla G(w), \nabla v) & = (f,v), 
\end{align*}
for any $v\in V_h$.
Subtracting the above two equations gives
\begin{multline}
\label{eqn3}
b(w,G(w)-G(u),v) + b(G(w)-G(u),w,v)+ b(w-u,G(u) -u ,v)   \\ + b(G(u)-w,w-u,v)
+ \nu (\nabla (G(w)-G(u)), \nabla v) = 0.
\end{multline}
Setting $v = G(w) - G(u)$ eliminates the first term and produces 
\begin{align*}
\nu(1-\nu^{-1}M\|\nabla w\|)\|\nabla (G(w)-G(u))\| \le  M\|\nabla (w-u)\| (\|\nabla (G(u)-w)\| + \|\nabla (G(u)-u)\|),
\end{align*}
thanks to inequality \eqref{bbd}. Then we have
\begin{align}
\label{eqn:newterr}
\|\nabla (G(w) - G(u))\| \le C_0  \|\nabla (w-u)\| (\|\nabla (G(u) - u)\| + \|\nabla (G(u)-w)\| ),
\end{align}
due to assumptions $\|\nabla w\| \le R < \nu M^{-1} $ and \eqref{smalldata}. 
From triangle inequality, we have \eqref{eqn:errbd} and finish the proof.
\end{proof}
Easily, one can end up with inequality
\begin{align*}
\|\nabla (w_{k+1} - w_k)\| \le C_0\|\nabla(w_k - w_{k-1})\|^2,
\end{align*} 
by setting  $u  = w_{k-1}, \ w= w_k := G(w_{k-1})$ in \eqref{eqn:newterr}.

Lastly, we show that the solution operator $G$ is Fr\'echet differentiable. 
\begin{dfn}
\label{def:dg}
Given $u, u+h \in B(0,R)$ with $R< \nu M^{-1}$, define $G'(u;\cdot ): V_h \to V_h$ such that 
\begin{multline}
\label{eqn:dg}
b(u, G'(u;h),v) + b(G'(u;h), u,v) + b(h, G(u) - u, v) + b(G(u)-(u+h), h,v)  \\
+ \nu(\nabla G'(u;h), \nabla v) = 0,
\end{multline}
for any $v\in V_h$.
\end{dfn}
\begin{lemma}
\label{lemma:dg}
$G'$ is well-defined over $B(0,R).$ Moreover, $G'$ is the Fr\'echet derivative of $G$ satisfying
\begin{align}
\label{eqn:fdg} 
\|\nabla (G(u+h) - G(u) - G'(u;h))\| \le  2C_0 \|\nabla h\| \|\nabla (G(u+h)-G(u))\|,
\end{align}
and is bounded by 
\begin{align}
\label{eqn:dgbd}
\|\nabla G'(u;h)\| &\le 2 C_0 \| \nabla (G(u)-u)\|  \|\nabla h\| + C_0 \|\nabla h\|^2, 
\end{align} 
for any $u, u+h \in B(0,R).$
\end{lemma}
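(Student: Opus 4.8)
The plan is to treat the three assertions—well-definedness of $G'$, the bound \eqref{eqn:dgbd}, and the differentiation estimate \eqref{eqn:fdg}—separately, in each case exploiting the skew-symmetry \eqref{bss} together with the continuity bound \eqref{bbd}, exactly as was done in Lemmas \ref{lemma:range} and \ref{lemma:newtquad}.

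For well-definedness I would note that \eqref{eqn:dg} is a \emph{linear} equation for the unknown $G'(u;h)$: collecting the terms containing $G'(u;h)$ gives the bilinear form $a(w,v):=b(u,w,v)+b(w,u,v)+\nu(\nabla w,\nabla v)$, while the remaining terms $b(h,G(u)-u,v)+b(G(u)-(u+h),h,v)$ form a bounded linear functional of $v$. Testing with $w=v$ and using $b(u,v,v)=0$ from \eqref{bss} yields $a(v,v)\ge \nu(1-\nu^{-1}M\|\nabla u\|)\|\nabla v\|^2>0$ since $\|\nabla u\|\le R<\nu M^{-1}$; hence $a$ is coercive, and as the problem is finite dimensional, \eqref{eqn:dg} has a unique solution.

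The estimate \eqref{eqn:dgbd} I would obtain by setting $v=G'(u;h)$ in \eqref{eqn:dg}, which annihilates $b(u,G'(u;h),G'(u;h))$ by \eqref{bss}. Bounding the three remaining trilinear terms with \eqref{bbd}, dividing by $\|\nabla G'(u;h)\|$, and invoking $\|\nabla u\|\le R$ with \eqref{smalldata} gives $\|\nabla G'(u;h)\|\le C_0\|\nabla h\|\bigl(\|\nabla(G(u)-u)\|+\|\nabla(G(u)-(u+h))\|\bigr)$, after which the triangle inequality $\|\nabla(G(u)-(u+h))\|\le\|\nabla(G(u)-u)\|+\|\nabla h\|$ produces \eqref{eqn:dgbd}.

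The core of the lemma, and the step I expect to be the main obstacle, is \eqref{eqn:fdg}. I would write the defining equation \eqref{eqn:newt1} at $u+h$ and at $u$ together with \eqref{eqn:dg}, and then form the combination (equation at $u+h$) $-$ (equation at $u$) $-$ (equation \eqref{eqn:dg}). The delicate part is the algebraic bookkeeping: the quadratic-in-$h$ contributions from expanding $b(u+h,u+h,v)$ must cancel exactly against the hand-picked terms $b(h,G(u)-u,v)+b(G(u)-(u+h),h,v)$ in \eqref{eqn:dg}—this cancellation is precisely what the definition of $G'$ is engineered to produce. Writing $\delta:=G(u+h)-G(u)-G'(u;h)$, the surviving identity should read $b(u,\delta,v)+b(\delta,u,v)+\nu(\nabla\delta,\nabla v)=-b(h,G(u+h)-G(u),v)-b(G(u+h)-G(u),h,v)$ for all $v\in V_h$; that is, $\delta$ solves the same coercive linearized problem, now with a right-hand side coupling $h$ to the increment $G(u+h)-G(u)$. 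Testing with $v=\delta$, killing $b(u,\delta,\delta)$ via \eqref{bss}, bounding the two right-hand terms by \eqref{bbd}, and dividing by $\|\nabla\delta\|$ then yields \eqref{eqn:fdg}. Finally, combining \eqref{eqn:fdg} with the Lipschitz bound for $G$ from Lemma \ref{lemma:newtquad}, which makes $\|\nabla(G(u+h)-G(u))\|=\bigo(\|\nabla h\|)$, shows $\|\nabla\delta\|=\bigo(\|\nabla h\|^2)=o(\|\nabla h\|)$, confirming that $G'$ is the Fr\'echet derivative of $G$.
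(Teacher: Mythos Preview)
Your proposal is correct and follows essentially the same approach as the paper: test \eqref{eqn:dg} with $v=G'(u;h)$ to get \eqref{eqn:dgbd} (and thereby well-definedness), then subtract the three equations to obtain the identity $b(u,\delta,v)+b(\delta,u,v)+\nu(\nabla\delta,\nabla v)=-b(h,G(u+h)-G(u),v)-b(G(u+h)-G(u),h,v)$ and test with $v=\delta$ to get \eqref{eqn:fdg}. The only cosmetic difference is that the paper shortcuts the subtraction step by reusing the already-derived identity \eqref{eqn3} from Lemma~\ref{lemma:newtquad} (with $w=u+h$) rather than re-expanding from \eqref{eqn:newt1}.
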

\begin{proof}
This proof includes two parts. First, we show $G'$ is well-defined and has an upper bound. Setting $v = G'(u;h)$ in \eqref{eqn:dg} eliminates the first term and produces
\begin{align*}
\nu (1-\nu^{-1}M\|\nabla u\|) \|\nabla G'(u;h)\| \le   2M \|\nabla (G(u)-u) \| \|\nabla h\| + M\|\nabla h\|^2,
\end{align*}
thanks to \eqref{bbd}, which reduces to \eqref{eqn:dgbd}.
 Since system \eqref{eqn:dg} is linear and finite dimensional, \eqref{eqn:dgbd} guarantee $G'$ is well-defined. 
Second, we manifest $G'$ is the Fr\'echet derivative of $G$. Denoting $\xi = G(u+h) - G(u) - G'(u;h)$ and subtracting \eqref{eqn:dg} from \eqref{eqn3} with $w = u+h$, we have
\begin{align*}
b(u, \xi, v)  + b(h,G(u+h)-G(u),\xi) + b(\xi, u,v) + b(G(u+h)-G(u), h,\xi) + \nu (\nabla \xi, \nabla v) = 0.
\end{align*}
Setting $v = \xi$ eliminates the first term and yields 
\begin{align}
\label{eqn:0}
 \nu(1-\nu^{-1}M\|\nabla u\|)  \| \nabla \xi \| \le 2M  \| \nabla (G(u+h)-G(u))\| \|\nabla h \| ,
 \end{align}
due to \eqref{bbd}.
Thus from $\| \nabla u\| \le R < \nu M^{-1} $ and \eqref{smalldata}, it leads to \eqref{eqn:fdg}.
Therefore $G'$ is the Fr\'echet derivative of $G$ and we finish the proof.
\end{proof}
\section{Anderson accelerated Newton's iteration for steady NSE}
In this section, we state algorithms of Anderson accelerated Newton's iteration (AAN) for steady NSE and give an analysis of superlinear convergence order. We start with the simplest case where the Anderson depth is 1 in the coming subsection, and then move on to  the Anderson  depth $m=2$ and general depth cases in the second and third subsections respectively.
\subsection{Anderson accelerated Newton's method with depth $m =1$}
\begin{alg}[Anderson accelerated Newton's iteration with depth $m = 1$ (AAN m=1)] 
\label{alg:aa1}
The algorithm of Anderson accelerated Newton's method with depth $m=1$ is stated as below: 
\begin{enumerate}
\item[Step 0] Guess $u_0\in B(0,R)$.
\item[Step 1] Compute $\tilde u_1 = G(u_0)$ and set the residual $y_1 =\tilde  u_1 - u_0,$ update $\ u_1 = \tilde u_1$.
\item[Step k] For $k = 2,3,\dots$
\begin{enumerate}
\item[a)] Compute $\tilde u_k = G(u_{k-1})$ and set the residual $y_k = \tilde u_k - u_{k-1}.$
\item[b)] Find $\alpha_k \in \mathbb{R}$ minimizing 
$$ \|\nabla ((1-\alpha_k) y_k + \alpha_k y_{k-1})\| .$$
\item[c)] Update $u_k = (1-\alpha_k) \tilde u_k + \alpha_k \tilde u_{k-1}.$
\end{enumerate}
\end{enumerate}
\end{alg}
We will use the residual sequence $\{y_k\}$ to discuss the convergence behavior of Algorithm \ref{alg:aa1}. For smooth analysis, we use the following notation throughout this subsection 
$$ e_k = u_ k - u_{k-1}, \quad \tilde e_k = \tilde u_k - \tilde u_{k-1}, \quad y_k^\alpha = (1-\alpha_k) y_k + \alpha_k y_{k-1}. $$
Comparing Algorithm \ref{alg:aa1} with the usual Newton's Algorithm \ref{alg:newt}, we add a minimization step at each iteration. It is clear that Algorithm \ref{alg:aa1} is back to Algorithm \ref{alg:newt} at step $k$ whenever $\alpha_k = 0$.
We make the following assumption in order to study the behavior of Algorithm \ref{alg:aa1}.
\begin{assumption}
\label{assump01}
For step $k\ge 2$, assuming $\alpha_k \neq 0$ and $ u_j \in B(0,R)$ with $R< \nu M^{-1}$  for all $j \le k.$ 
\end{assumption}
We now give an expression of $\alpha_k$ in terms of residuals in next lemma. 
\begin{lemma}
\label{lemma:alpha1}
For any step k with $\alpha_k \neq 0$, let Anderson gain $\theta_k : = \|\nabla y_k^\alpha \| / \|\nabla y_k\|$, 
  then $\theta_k \in [0,1)$ and 
 \begin{align}
 \label{eqn:alpha0}
 |\alpha_k| &= \frac{\sqrt{1-\theta_k^2}\|\nabla y_k\|}{\|\nabla (y_k - y_{k-1})\|}.
 \end{align}
\end{lemma}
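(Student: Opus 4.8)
The plan is to recognize the minimization in step (b) of Algorithm~\ref{alg:aa1} as a one-dimensional linear least-squares problem and to solve it explicitly. Using $\nabla y_k^\alpha = (1-t)\nabla y_k + t\nabla y_{k-1} = \nabla y_k - t\,\nabla(y_k - y_{k-1})$ evaluated at the minimizing value $t=\alpha_k$, I would write the squared objective as a scalar quadratic
$$\phi(t) := \|\nabla y_k - t\,\nabla(y_k - y_{k-1})\|^2 = \|\nabla y_k\|^2 - 2t\,(\nabla y_k, \nabla(y_k-y_{k-1})) + t^2\|\nabla(y_k - y_{k-1})\|^2.$$
Under Assumption~\ref{assump01} the minimizer $\alpha_k$ is nonzero, which forces $\|\nabla(y_k - y_{k-1})\| \neq 0$ (otherwise $\phi$ is constant and $\alpha_k = 0$ would be selected), so $\phi$ is strictly convex. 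Setting $\phi'(\alpha_k)=0$ then gives the closed form $\alpha_k = (\nabla y_k, \nabla(y_k - y_{k-1})) / \|\nabla(y_k - y_{k-1})\|^2$.

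Next I would substitute this minimizer back to evaluate the optimal value and hence $\theta_k$. A direct computation yields
$$\|\nabla y_k^\alpha\|^2 = \|\nabla y_k\|^2 - \frac{(\nabla y_k, \nabla(y_k - y_{k-1}))^2}{\|\nabla(y_k - y_{k-1})\|^2},$$
so that, dividing by $\|\nabla y_k\|^2$,
$$\theta_k^2 = 1 - \frac{(\nabla y_k, \nabla(y_k - y_{k-1}))^2}{\|\nabla y_k\|^2\,\|\nabla(y_k - y_{k-1})\|^2}.$$
The Cauchy--Schwarz inequality applied to the last fraction shows $0 \le \theta_k^2 \le 1$; moreover, since $\alpha_k \neq 0$ the numerator $(\nabla y_k, \nabla(y_k - y_{k-1}))$ is nonzero, so the inequality is strict and $\theta_k \in [0,1)$.

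Finally, I would combine the two displayed relations to extract $|\alpha_k|$. From the expression for $\theta_k^2$ one gets $\sqrt{1-\theta_k^2} = |(\nabla y_k, \nabla(y_k - y_{k-1}))| / (\|\nabla y_k\|\,\|\nabla(y_k - y_{k-1})\|)$, and substituting $|(\nabla y_k, \nabla(y_k - y_{k-1}))| = |\alpha_k|\,\|\nabla(y_k - y_{k-1})\|^2$ from the formula for $\alpha_k$ and solving for $|\alpha_k|$ produces exactly \eqref{eqn:alpha0}. The computation is essentially routine; the only point requiring care is using $\alpha_k \neq 0$ twice, both to guarantee that the least-squares problem is nondegenerate (so the denominator $\|\nabla(y_k - y_{k-1})\|$ is strictly positive and $\alpha_k$ is well-defined) and to upgrade $\theta_k \le 1$ to the strict bound $\theta_k < 1$.
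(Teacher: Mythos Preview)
Your proof is correct and follows essentially the same route as the paper: both set the derivative of the scalar quadratic $\phi(\alpha)=\|\nabla((1-\alpha)y_k+\alpha y_{k-1})\|^2$ to zero to obtain $\alpha_k\|\nabla(y_k-y_{k-1})\|^2=(\nabla y_k,\nabla(y_k-y_{k-1}))$, then evaluate the optimal value to relate $\theta_k^2$ and $\alpha_k^2$. Your write-up is in fact slightly more careful than the paper's, since you explicitly justify why $\|\nabla(y_k-y_{k-1})\|\neq 0$ and why $\alpha_k\neq 0$ forces the strict inequality $\theta_k<1$.
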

\begin{proof}
It is clearly $\theta_k \in (0,1]$ by the choice of $\alpha_k$.
Let $f(\alpha) = \| \nabla (1-\alpha) y_k  + \alpha y_{k-1}\|^2$, then $\alpha_k$ is the stationary point of $f(\alpha).$ That is, $f'(\alpha_k) = 0,$ which gives $$ \alpha_k \| \nabla (y_k - y_{k-1})\|^2 =  (\nabla y_k , \nabla (y_k - y_{k-1})).$$
By the definition of $\theta_k$, we have 
\begin{align*}
\theta_k^2 \|\nabla y_k\|^2  & = \|\nabla y_k^\alpha\|^2 
 = \|\nabla y_k\|^2 - \alpha_k^2 \|\nabla (y_k - y_{k-1})\|^2, 
\end{align*}
which leads to \eqref{eqn:alpha0}.
\end{proof}
Before we give the main convergence result for Algorithm \ref{alg:aa1}, we present a few lemmas that play key roles in the analysis. First, we list a few identities that will be used repeatedly in this subsection
\begin{align}
\label{eqn:id11}
\tilde u_k - u_k & = \alpha_k \tilde e_k ,\\
\label{eqn:id12}
y_k- y_{k-1} & = \tilde e_k - e_{k-1},\\
\label{eqn:id13}
y_k^\alpha & = e_k + \alpha_k e_{k-1}.
\end{align} 
Next, we show that the difference between solutions from successive iterations can be bounded by the residuals. 
\begin{lemma}
\label{lemma1}
Let Assumption \ref{assump01} holds, there exists a positive constant $C_1 = C_1(\alpha_k,  \nu, |\Omega|, f, R)$ such that 
\begin{align}
\label{eqn:ek1bd}
|\alpha_k| \|\nabla e_{k-1}\| \le C_1 \| \nabla y_k\|,
\end{align}
and 
\begin{align}
\label{eqn:ekbd}
\|\nabla e_k \| \le C_1 \|\nabla y_k\|.
\end{align}
To be more specific, $C_1 = \sqrt{3+2C_0R +4C_0^2 (R+2|\alpha_k|R_G)^2 +4C_0^2R_{res}^2} .$
\end{lemma}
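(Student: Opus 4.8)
The plan is to prove the first bound \eqref{eqn:ek1bd} and then read off \eqref{eqn:ekbd} from it, using the algebraic identities \eqref{eqn:id11}--\eqref{eqn:id13} to tie $e_k$, $e_{k-1}$ and the Newton-step difference $\tilde e_k$ to the residuals. First I would rewrite \eqref{eqn:id12} as $e_{k-1} = \tilde e_k - (y_k - y_{k-1})$, so that
\begin{align*}
|\alpha_k|\,\|\nabla e_{k-1}\| \le |\alpha_k|\,\|\nabla \tilde e_k\| + |\alpha_k|\,\|\nabla (y_k - y_{k-1})\|.
\end{align*}
The second term is precisely the quantity evaluated in Lemma \ref{lemma:alpha1}: $|\alpha_k|\,\|\nabla(y_k - y_{k-1})\| = \sqrt{1-\theta_k^2}\,\|\nabla y_k\| \le \|\nabla y_k\|$. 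Everything therefore reduces to estimating the Newton-step difference $|\alpha_k|\,\|\nabla \tilde e_k\|$ by a multiple of $\|\nabla y_k\|$.

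For that term I would exploit $\tilde e_k = \tilde u_k - \tilde u_{k-1} = G(u_{k-1}) - G(u_{k-2})$ and apply the quadratic bound \eqref{eqn:errbd} with $w = u_{k-1}$, $u = u_{k-2}$; since $G(u_{k-2}) - u_{k-2} = y_{k-1}$ this gives
\begin{align*}
\|\nabla \tilde e_k\| \le 2C_0\,\|\nabla e_{k-1}\|\,\|\nabla y_{k-1}\| + C_0\,\|\nabla e_{k-1}\|^2.
\end{align*}
The scalar factors are then pinned down by Lemma \ref{lemma:range}: the residual obeys $\|\nabla y_{k-1}\| \le R_{res}$, the iterates obey $\|\nabla u_j\| \le R$, and identity \eqref{eqn:id11} gives $\|\nabla(\tilde u_k - u_k)\| = |\alpha_k|\,\|\nabla \tilde e_k\| \le 2|\alpha_k|R_G$, whence $\|\nabla \tilde u_k\| \le R + 2|\alpha_k| R_G$; these are the origins of the factors $R_{res}$ and $R + 2|\alpha_k| R_G$ appearing in $C_1$. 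Inserting them, squaring, and separating the mixed products with Young's inequality should assemble the $\|\nabla y_k\|^2$-contributions into the constant $3$ and the quadratic-estimate contributions into $2C_0 R + 4C_0^2(R + 2|\alpha_k|R_G)^2 + 4C_0^2 R_{res}^2$, which is \eqref{eqn:ek1bd}. The second bound \eqref{eqn:ekbd} then comes cheaply: writing $e_k = y_k - \alpha_k \tilde e_k$ (equivalently $e_k = y_k^\alpha - \alpha_k e_{k-1}$ from \eqref{eqn:id13}, with $\|\nabla y_k^\alpha\| = \theta_k\|\nabla y_k\| \le \|\nabla y_k\|$) and reusing the estimate of $|\alpha_k|\,\|\nabla \tilde e_k\|$ lets the same constant $C_1$ control $\|\nabla e_k\|$.

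I expect the main obstacle to be the apparent circularity in the Newton-step term: because $\tilde e_k = G(u_{k-1}) - G(u_{k-2})$ feeds back onto $e_{k-1}$ through the quadratic estimate, bounding $|\alpha_k|\,\|\nabla \tilde e_k\|$ naively re-introduces $|\alpha_k|\,\|\nabla e_{k-1}\|$. The way around this is to keep the factor $|\alpha_k|$ attached to the residual difference $y_k - y_{k-1}$, where Lemma \ref{lemma:alpha1} turns it into $\|\nabla y_k\|$, while bounding the remaining geometric factors of the Newton step by the fixed radii $R$, $R_G$, $R_{res}$ supplied by Lemma \ref{lemma:range} under Assumption \ref{assump01} rather than by $e_{k-1}$ itself. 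Matching the precise form of $C_1$ --- in particular securing the \emph{same} constant in \eqref{eqn:ek1bd} and \eqref{eqn:ekbd} and collecting the cross terms into the stated sum of squares under the root --- is the bookkeeping I would treat most carefully.
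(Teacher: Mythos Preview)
Your plan has a genuine gap at exactly the point you flag as the main obstacle. Starting from $e_{k-1}=\tilde e_k-(y_k-y_{k-1})$ and invoking the abstract quadratic bound \eqref{eqn:errbd},
\[
\|\nabla\tilde e_k\|\le 2C_0\|\nabla e_{k-1}\|\,\|\nabla y_{k-1}\|+C_0\|\nabla e_{k-1}\|^2,
\]
the right-hand side carries $\|\nabla e_{k-1}\|$ in every term. Replacing the ``scalar factors'' by the fixed radii $R_{res}$ and $2R$ yields, at best,
\[
\|\nabla e_{k-1}\|\le 2C_0(R_{res}+R)\,\|\nabla e_{k-1}\|+\|\nabla(y_k-y_{k-1})\|,
\]
which closes only if $2C_0(R_{res}+R)<1$. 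Nothing in Assumption~\ref{assump01} forces this: $C_0R=\nu^{-1}MR/(1-\nu^{-1}MR)$ can be arbitrarily large as $R$ approaches $\nu M^{-1}$. Alternatively, bounding \emph{both} occurrences of $\|\nabla e_{k-1}\|$ by $2R$ gives $|\alpha_k|\,\|\nabla\tilde e_k\|\le |\alpha_k|\cdot\text{const}$, and $|\alpha_k|$ is not controlled by $\|\nabla y_k\|$ (Lemma~\ref{lemma:alpha1} gives $|\alpha_k|=\sqrt{1-\theta_k^2}\,\|\nabla y_k\|/\|\nabla(y_k-y_{k-1})\|$, which can be large). Either way the circularity is not broken.

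The paper avoids this by not using \eqref{eqn:errbd} at all for this lemma. It returns to the variational identity \eqref{eqn:newt}, subtracts the equations at levels $k-1$ and $k-2$, and \emph{tests the resulting error equation with $v=e_{k-1}$}. Two structural features then do the work: skew-symmetry \eqref{bss} kills the term $b(y_{k-1},e_{k-1},e_{k-1})$, and the polarization identity applied to $(\nabla\tilde e_k,\nabla e_{k-1})$ together with \eqref{eqn:id12} produces $\|\nabla e_{k-1}\|^2$ on the left with the ``bad'' difference $\|\nabla(y_k-y_{k-1})\|^2$ moved to the right. This yields directly
\[
\|\nabla e_{k-1}\|\le \tilde C\,\|\nabla(y_k-y_{k-1})\|,\qquad \tilde C=\sqrt{2(1+C_0R)+4C_0^2(R+2|\alpha_k|R_G)^2+4C_0^2R_{res}^2},
\]
with no absorption step and no extra smallness hypothesis; the constant $R+2|\alpha_k|R_G$ enters because one of the trilinear terms carries $u_{k-2}+\alpha_k\tilde e_k$. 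Multiplying by $|\alpha_k|$ and applying \eqref{eqn:alpha0} gives \eqref{eqn:ek1bd}. Your derivation of \eqref{eqn:ekbd} from \eqref{eqn:ek1bd} via \eqref{eqn:id13} is correct and matches the paper; the final constant $C_1=\sqrt{1+\tilde C^2}$ arises from $\max_{\theta\in[0,1)}\{\theta+\tilde C\sqrt{1-\theta^2}\}$.
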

\begin{proof}
We begin by the following equation 
\begin{align}
\label{eqn:newt}
b(u_j, \tilde u_{j+1}, v) + b(\tilde u_{j+1}, u_j, v)  - b(u_j,u_j,v) + \nu (\nabla \tilde u_{j+1}, \nabla v) & = (f,v),
\end{align}
for any nonnegative integer $j$. Subtracting \eqref{eqn:newt} with $j = k-2$ from \eqref{eqn:newt} with $j = k-1$ yields 
\begin{align}
\label{eqn:temp}
\nu(\nabla \tilde e_k, \nabla v) & =  - b(\tilde e_k, \tilde u_k, v) - b(u_{k-2}, \tilde e_k, v) + b(y_k - y_{k-1}, y_k, v) - b(y_{k-1}, e_{k-1}, v) \nonumber\\
&= -b(\tilde e_k , u_k ,v) -  b( u_{k-2} + \alpha_k\tilde e_k, \tilde e_k, v) + b(y_k - y_{k-1}, y_k, v) - b(y_{k-1}, e_{k-1}, v) ,
\end{align}
thanks to \eqref{eqn:id12} and \eqref{eqn:id11}.
Setting $v = e_{k-1}$ eliminates the last term and gives 
\begin{multline*}
\frac{\nu}2 \left( \|\nabla \tilde e_k\|^2 + \|\nabla e_{k-1}\|^2 - \|\nabla (y_k - y_{k-1})\|^2 \right) \le 
M\|\nabla  u_k\| \|\nabla \tilde e_k \| \|\nabla e_{k-1}\|  \\
 + M\|\nabla ( u_{k-2}+ \alpha_k \tilde e_k)\| \|\nabla \tilde e_k\| \|\nabla (y_k - y_{k-1})\|+ M\|\nabla (y_k - y_{k-1})\| \|\nabla y_k\| \|\nabla e_{k-1}\|,
\end{multline*}
thanks the polarization identity, \eqref{eqn:id12}, \eqref{bbd} and \eqref{bss}. 
From inequalities $ab \le \frac{a^2 + b^2}2$, triangle inequality $\|\nabla (u_{k-2} + \alpha_k \tilde e_k) \| \le R + 2|\alpha_k|R_G, $ and  the Young's inequality, we obtain 
\begin{multline*}
\frac{\nu}4(1-\nu^{-1}MR) \left(\|\nabla \tilde e_k\|^2 + \|\nabla e_{k-1}\|^2\right) \le \\
\left( \frac{\nu}2 +  \nu^{-1}(1-\nu^{-1}MR)^{-1}M^2 (R+2|\alpha_k|R_G)^2 + \nu^{-1}(1-\nu^{-1}MR)^{-1} M^2\|\nabla y_k\|^2  \right)\|\nabla (y_k-y_{k-1})\|^2 .
\end{multline*}
Dropping the term with $\|\nabla \tilde e_k\|$ yields
$$ \|\nabla e_{k-1}\| \le \tilde C \|\nabla (y_k-y_{k-1})\| \le C_1\|\nabla (y_k-y_{k-1})\| ,$$
where $\tilde C = \sqrt{2(1+C_0R) +4C_0^2 (R+2|\alpha_k|R_G)^2 +4C_0^2R_{res}^2} $. 
Applying \eqref{eqn:alpha0} gives
$$ |\alpha_k| \|\nabla e_{k-1}\| \le \tilde C\sqrt{1-\theta_k^2}\|\nabla y_k  \|,$$ and \eqref{eqn:ek1bd}.
Furthermore,  we obtain $$ \| \nabla e_k \| \le \|\nabla y_k^\alpha\| + |\alpha_k| \|\nabla e_{k-1}\| \le (\theta_k + \tilde C\sqrt{1-\theta_k^2}) \|\nabla y_k\| ,$$
thanks to \eqref{eqn:id13}. Therefore \eqref{eqn:ekbd} holds as
$\max\limits_{\theta_k\in [0,1)} \theta_k + \tilde C\sqrt{1-\theta_k^2}  = \sqrt{1+\tilde C^2} = C_1.$
\end{proof}
The next lemma uses the Fr\'echet derivative properties of $G$ as presented in Lemma \ref{lemma:dg}.
\begin{lemma}
\label{lemma2}
Let Assumption \ref{assump01} holds, then 
\begin{align}
\label{eqn:yk2}
  \|\nabla (G'(u_{k-1}; e_k) + \alpha_k G'(u_{k-2}; e_{k-1}))\| 
\le  C_0C_1 \left( 4+C_1+(2+C_1)/|\alpha_k| \right)\|\nabla y_k\|^2 = \mathcal{O}(\|\nabla y_k\|^2).
\end{align}
and \begin{multline}
\| \nabla (y_{k+1} - G'(u_{k-1}; e_k) - \alpha_k G'(u_{k-2}; e_{k-1}))\| \le \\
2C_0^2C_1^2 ( 2+C_1 + 2/|\alpha_k| + C_1/|\alpha_k|^2) \|\nabla y_k \|^3 = \mathcal{O}(\|\nabla y_k\|^3) .
\label{eqn:yk1}
\end{multline}
\end{lemma}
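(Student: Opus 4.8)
The plan is to handle the two estimates in sequence, since the second reuses the algebra needed for the first.

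For \eqref{eqn:yk2} I would bound the two Fr\'echet-derivative terms separately via the triangle inequality and the derivative bound \eqref{eqn:dgbd}. The key observation is that $G(u_{k-1})-u_{k-1}=\tilde u_k-u_{k-1}=y_k$ and $G(u_{k-2})-u_{k-2}=\tilde u_{k-1}-u_{k-2}=y_{k-1}$, so \eqref{eqn:dgbd} gives $\|\nabla G'(u_{k-1};e_k)\|\le 2C_0\|\nabla y_k\|\|\nabla e_k\|+C_0\|\nabla e_k\|^2$ and likewise for $G'(u_{k-2};e_{k-1})$ with $y_{k-1}$ and $e_{k-1}$. I would then convert $\|\nabla e_k\|$ and $|\alpha_k|\|\nabla e_{k-1}\|$ into multiples of $\|\nabla y_k\|$ via \eqref{eqn:ekbd} and \eqref{eqn:ek1bd} of Lemma \ref{lemma1}. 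The only factor not immediately controlled is $\|\nabla y_{k-1}\|$; here I would use the Anderson relation \eqref{eqn:alpha0}, namely $\|\nabla(y_k-y_{k-1})\|=\sqrt{1-\theta_k^2}\,\|\nabla y_k\|/|\alpha_k|\le \|\nabla y_k\|/|\alpha_k|$, together with $\|\nabla y_{k-1}\|\le \|\nabla y_k\|+\|\nabla(y_k-y_{k-1})\|\le(1+1/|\alpha_k|)\|\nabla y_k\|$. Collecting the $\|\nabla y_k\|^2$ terms then produces the coefficient $C_0C_1(4+C_1+(2+C_1)/|\alpha_k|)$.

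For \eqref{eqn:yk1} the heart of the matter is an exact decomposition of $y_{k+1}$ into the two first-order terms plus Fr\'echet remainders. Writing $u_k=u_{k-1}+e_k$ and $u_{k-1}=u_{k-2}+e_{k-1}$, I introduce $\xi_1:=G(u_k)-G(u_{k-1})-G'(u_{k-1};e_k)$ and $\xi_2:=G(u_{k-1})-G(u_{k-2})-G'(u_{k-2};e_{k-1})$. Using $y_{k+1}=G(u_k)-u_k$, the identity \eqref{eqn:id11} in the form $G(u_{k-1})-u_k=\tilde u_k-u_k=\alpha_k\tilde e_k$, and $\tilde e_k=G(u_{k-1})-G(u_{k-2})=G'(u_{k-2};e_{k-1})+\xi_2$, a short substitution yields
\begin{align*}
y_{k+1}-G'(u_{k-1};e_k)-\alpha_k G'(u_{k-2};e_{k-1})=\xi_1+\alpha_k\xi_2.
\end{align*}
Thus the estimate reduces to showing $\|\nabla\xi_1\|$ and $|\alpha_k|\|\nabla\xi_2\|$ are $\mathcal{O}(\|\nabla y_k\|^3)$.

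To that end I would invoke the Fr\'echet property \eqref{eqn:fdg}, giving $\|\nabla\xi_1\|\le 2C_0\|\nabla e_k\|\|\nabla(G(u_k)-G(u_{k-1}))\|$ and $\|\nabla\xi_2\|\le 2C_0\|\nabla e_{k-1}\|\|\nabla(G(u_{k-1})-G(u_{k-2}))\|$. The crucial point is that each consecutive difference is itself \emph{second} order: Lemma \ref{lemma:newtquad} bounds $\|\nabla(G(u_k)-G(u_{k-1}))\|$ by $2C_0\|\nabla e_k\|\|\nabla y_k\|+C_0\|\nabla e_k\|^2$, and analogously with $e_{k-1},y_{k-1}$. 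Substituting \eqref{eqn:ekbd}, \eqref{eqn:ek1bd}, and the bound on $\|\nabla y_{k-1}\|$ from above, every term collapses to order $\|\nabla y_k\|^3$, and tallying constants gives $2C_0^2C_1^2(2+C_1+2/|\alpha_k|+C_1/|\alpha_k|^2)$. I expect the main obstacle to be bookkeeping rather than concept: one must carefully keep $e_{k-1}$ paired as $|\alpha_k|\|\nabla e_{k-1}\|$ (order $\|\nabla y_k\|$) versus $\|\nabla e_{k-1}\|$ (order $\|\nabla y_k\|/|\alpha_k|$) so no spurious lower-order term survives, and must apply the second-order Lemma \ref{lemma:newtquad} — not the weaker first-order control of $\tilde e_k$ available from Lemma \ref{lemma1} — to the consecutive $G$-differences, since only the former supplies the extra power of $\|\nabla y_k\|$ needed for cubic decay.
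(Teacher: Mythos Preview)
Your proposal is correct and mirrors the paper's proof: the paper also bounds the two $G'$ terms separately via \eqref{eqn:dgbd} and Lemma~\ref{lemma1}, derives the identity $y_{k+1}-G'(u_{k-1};e_k)-\alpha_k G'(u_{k-2};e_{k-1})=\psi_k+\alpha_k\psi_{k-1}$ (your $\xi_1+\alpha_k\xi_2$) from $y_{k+1}=\tilde e_{k+1}+\alpha_k\tilde e_k$, and then combines \eqref{eqn:fdg} with \eqref{eqn:errbd} exactly as you describe. The only adjustment needed to reproduce the stated constant in \eqref{eqn:yk1} is to apply \eqref{eqn:errbd} to $\tilde e_k=G(u_{k-1})-G(u_{k-2})$ with $u=u_{k-1}$ (so that $G(u)-u=y_k$) rather than ``analogously with $e_{k-1},y_{k-1}$''; your choice still yields the correct $\mathcal{O}(\|\nabla y_k\|^3)$ order but with a slightly larger coefficient.
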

\begin{proof}
Utilizing \eqref{eqn:dgbd} produces
\begin{align*}
\|\nabla G'(u_{k-1}; e_k)\|  & \le 2C_0\|\nabla y_k\| \|\nabla e_k\|+ C_0 \|\nabla e_k\|^2 \le  C_0C_1\left(2 + C_1\right)\|\nabla y_k\|^2,
\end{align*}
and 
\begin{align*}
 |\alpha_k| \|\nabla G'(u_{k-2}; e_{k-1})\|  
 \le & 2C_0\|\nabla y_{k-1}\| |\alpha_k| \|\nabla e_{k-1}\| + C_0|\alpha_k|\|\nabla e_{k-1}\|^2 \\
 \le & 2C_0C_1 (\|\nabla y_k\| + \|\nabla (y_k-y_{k-1})\|)\|\nabla y_k\| + C_0 C_1^2/|\alpha_k| \|\nabla y_k\|^2 \\
 \le & 2C_0C_1\|\nabla y_k\|^2 + C_0C_1(2+C_1)/|\alpha_k|\|\nabla y_k\|^2,
 \end{align*}
thanks to triangle inequality, \eqref{eqn:alpha0} and Lemma \ref{lemma1}.
Combining the above two inequalities yields \eqref{eqn:yk2}.

Next we prove the inequality \eqref{eqn:yk1}.
For notation simplification, we denote $\psi_k := G(u_k)- G(u_{k-1}) - G'(u_{k-1}; e_k)$. Utilizing \eqref{eqn:id11}, we have identity $y_{k+1} = \tilde e_{k+1} + \alpha_k  \tilde e_k$ and then
\begin{align*}
y_{k+1}  - G'(u_{k-1}; e_k) - \alpha_k G'(u_{k-2}; e_{k-1}) =  \psi_k +\alpha_k \psi_{k-1}.
\end{align*}
From equation \eqref{eqn:errbd} and Lemma \ref{lemma1}, we have
\begin{align*}
\|\nabla \tilde e_{k+1}\|  & \le 2C_0\|\nabla y_k\| \|\nabla e_k\| + C_0\|\nabla e_k\|^2 \le C_0C_1 (2+C_1) \|\nabla y_k\|^2,
\end{align*}
 \[
\|\nabla \tilde e_k\|  \le  2C_0\|\nabla y_k\|\|\nabla e_{k-1}\| + C_0\|\nabla e_{k-1}\|^2 \le C_0C_1/|\alpha_k|(2+ C_1/|\alpha_k|) \|\nabla y_k\|^2.
\]
Combining the above inequalities with \eqref{eqn:fdg} and Lemma \ref{lemma1}, we obtain \begin{align*}
  \| \nabla (\psi_k + \alpha_k \psi_{k-1})\| 
\le & 2C_0\|\nabla e_k\| \|\tilde e_{k+1}\| + 2|\alpha_k|C_0\|\nabla e_{k-1}\|\|\nabla \tilde e_{k}\| \\
 \le&  2C_0^2C_1^2 ( 2+C_1 + 2/|\alpha_k| + C_1/|\alpha_k|^2) \|\nabla y_k \|^3  .
  \end{align*}
 \end{proof}
Now we are ready to give the superlinearly convergence result of Algorithm \ref{alg:aa1}.
\begin{thm}\label{thm:aa1}[One-step residual bound for $m=1$]
Let Assumption \ref{assump01} holds, then the residual sequence $\{y_k\}$ from Algorithm \ref{alg:aa1} satisfies 
\begin{align}
\label{eqn:supconv1}
\|\nabla y_{k+1}\| \le C_2\|\nabla y_k\|^{3/2},\quad  \forall k\ge 2,
\end{align}
where $C_2$ depends on $\nu, M, R,\|f\|_{-1},\alpha_k,\theta_k$.
\end{thm}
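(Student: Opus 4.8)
The plan is to read off the one-step bound directly from the two estimates in Lemma~\ref{lemma2}, which already carry out the analytic work; the theorem is essentially their bookkeeping corollary. The starting point is the exact algebraic identity
\[
y_{k+1} = \bigl(G'(u_{k-1};e_k) + \alpha_k G'(u_{k-2};e_{k-1})\bigr) + \bigl(y_{k+1} - G'(u_{k-1};e_k) - \alpha_k G'(u_{k-2};e_{k-1})\bigr),
\]
valid for $k\ge 2$ under Assumption~\ref{assump01} (so that $u_{k-2}$ is defined and all iterates lie in $B(0,R)$); this is the same splitting already used in the proof of Lemma~\ref{lemma2}. Applying the triangle inequality and invoking the quadratic bound \eqref{eqn:yk2} on the first group together with the cubic bound \eqref{eqn:yk1} on the second, I would obtain
\[
\|\nabla y_{k+1}\| \le C_0C_1\bigl(4+C_1+(2+C_1)/|\alpha_k|\bigr)\|\nabla y_k\|^2 + 2C_0^2C_1^2\bigl(2+C_1+2/|\alpha_k|+C_1/|\alpha_k|^2\bigr)\|\nabla y_k\|^3.
\]
Thus the genuine one-step behavior is $\|\nabla y_{k+1}\| = \mathcal{O}(\|\nabla y_k\|^2)$, with a constant that degrades like $|\alpha_k|^{-2}$ as $\alpha_k\to 0$ but stays finite whenever $\alpha_k\neq 0$.

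Second, I would convert this quadratic-type estimate into the stated superlinear form by exploiting that the residuals are uniformly bounded. Since $y_k = \tilde u_k - u_{k-1} = G(u_{k-1}) - u_{k-1}$ with $u_{k-1}\in B(0,R)$, the bound \eqref{eqn:guubd} of Lemma~\ref{lemma:range} gives $\|\nabla y_k\| \le R_{res}$ for every $k$. Writing $\|\nabla y_k\|^2 = \|\nabla y_k\|^{1/2}\|\nabla y_k\|^{3/2} \le R_{res}^{1/2}\|\nabla y_k\|^{3/2}$ and $\|\nabla y_k\|^3 \le R_{res}^{3/2}\|\nabla y_k\|^{3/2}$ collapses both terms onto the common power $3/2$, yielding \eqref{eqn:supconv1} with
\[
C_2 = C_0C_1\bigl(4+C_1+(2+C_1)/|\alpha_k|\bigr)R_{res}^{1/2} + 2C_0^2C_1^2\bigl(2+C_1+2/|\alpha_k|+C_1/|\alpha_k|^2\bigr)R_{res}^{3/2}.
\]
Here $C_0,R_{res}$ depend only on $\nu,M,R,\|f\|_{-1}$, while $C_1$ carries the $\alpha_k$-dependence (and, if one keeps the sharper factor $\theta_k+\tilde C\sqrt{1-\theta_k^2}$ from Lemma~\ref{lemma1} instead of maximizing it away, also the $\theta_k$-dependence through \eqref{eqn:alpha0}); so $C_2$ has exactly the advertised dependence on $\nu,M,R,\|f\|_{-1},\alpha_k,\theta_k$.

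I do not expect a genuine obstacle here, since the substantive estimates are Lemmas~\ref{lemma1}--\ref{lemma2}. The only points requiring care are (i) justifying the uniform residual bound $\|\nabla y_k\|\le R_{res}$ from \eqref{eqn:guubd}, which is precisely what legitimizes trading one factor $\|\nabla y_k\|^{1/2}$ for the constant $R_{res}^{1/2}$, and (ii) being transparent that the true one-step rate is $\mathcal{O}(\|\nabla y_k\|^2)$, the exponent $3/2$ being the conservative value one records so that only a single half-power of $\|\nabla y_k\|$ is absorbed and the statement takes the same shape as in the deeper-depth analyses of the following subsections. If one instead insisted that $C_2$ remain bounded as $\alpha_k\to 0$, I would split into the cases $|\alpha_k|\ge \|\nabla y_k\|^{1/2}$ and $|\alpha_k|<\|\nabla y_k\|^{1/2}$, using \eqref{eqn:alpha0} to show the $1/|\alpha_k|$ terms are themselves $\mathcal{O}(\|\nabla y_k\|^{3/2})$ in the first case and a near-Newton quadratic estimate in the second; but this refinement is not needed for the statement as written.
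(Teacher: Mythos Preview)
Your argument is correct, but it takes a genuinely different and more elementary route than the paper. You simply apply the triangle inequality to the splitting $y_{k+1}=\chi+(y_{k+1}-\chi)$ with $\chi=G'(u_{k-1};e_k)+\alpha_kG'(u_{k-2};e_{k-1})$ and read off Lemma~\ref{lemma2}, obtaining the stronger estimate $\|\nabla y_{k+1}\|=\mathcal O(\|\nabla y_k\|^2)$, which you then relax to the stated exponent $3/2$ via the uniform bound $\|\nabla y_k\|\le R_{res}$ from \eqref{eqn:guubd}. The paper instead derives a weak equation for $y_{k+1}$ by combining the Newton relations \eqref{eqn:newt} at levels $k,k-1,k-2$, tests it with $v=\chi$, and uses the polarization identity together with Lemmas~\ref{lemma1}--\ref{lemma2} to bound $\|\nabla y_{k+1}\|^2$ by terms whose leading order is $\|\nabla y_k\|\,\|\nabla\chi\|=\mathcal O(\|\nabla y_k\|^3)$; the exponent $3/2$ then appears directly upon taking a square root, with no artificial weakening and with the gain factor $\theta_k$ entering naturally through the $b(y_k^\alpha,u_{k-1},\chi)$ term. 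Your approach is shorter and actually yields a sharper intermediate rate; the paper's approach produces the $3/2$ exponent intrinsically and mirrors the test-function machinery reused in the $m=2$ proof of Theorem~\ref{thm:aa2}.
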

\begin{proof}
We begin by constructing an equation of $y_{k+1}$ using \eqref{eqn:newt}. Adding $(1-\alpha_k)$ multiple of \eqref{eqn:newt} with $j = k-1$ to $\alpha_k$ multiple of \eqref{eqn:newt} with $j= k-2$  gives
\begin{multline}
\label{eqn4}
b(u_{k-1}, u_k, v) - b(e_{k-1}, \alpha_k \tilde u_{k-1}, v) + b(y_k^\alpha, u_{k-1}, v) - \alpha_k b(y_{k-1}, e_{k-1}, v) + \nu(\nabla u_k, \nabla v) \\
= (f,v).
\end{multline}
Subtracting \eqref{eqn4} from \eqref{eqn:newt} with $j = k$ produces
\begin{multline*}
b(u_k, y_{k+1}, v) + b(y_{k+1}, u_k, v) + b(e_k , u_k, v) + \alpha_k b(e_{k-1}, \tilde u_{k-1}, v) - b(y_k^\alpha, u_{k-1}, v) \\
+ \alpha_k b(y_{k-1}, e_{k-1}, v) + \nu (\nabla y_{k+1}, \nabla v) = 0.
\end{multline*}
Setting $v = \chi: = G'(u_{k-1}; e_k) + \alpha_k G'(u_{k-2}; e_{k-1})$ gives
\begin{align*}
 & \frac{\nu}2 (\|\nabla y_{k+1}\|^2 + \|\nabla \chi\|^2 - \|\nabla (y_{k+1} - \chi )\|^2) \\
  \le & MR\|\nabla \chi\| \|\nabla (y_{k+1} - \chi )\| + \frac{MR}{2} (\|\nabla y_{k+1}\|^2 + \|\nabla \chi\|^2) + MR\|\nabla e_k\| \|\nabla \chi\|  \\
 & + M R_G |\alpha_k| \|\nabla e_{k-1}\| \|\nabla \chi\|  + \theta_k MR \| \nabla y_k\| \|\nabla \chi\| + M |\alpha_k| \|\nabla e_{k-1}\|  \|\nabla y_{k-1}\| \|\nabla \chi\|,
\end{align*}
thanks to the polarization identity, \eqref{bss} and \eqref{bbd}. 
Dropping the term with $\|\nabla \chi\|^2$ and dividing both sides by $\frac{\nu}2(1-\nu^{-1}MR)$, this reduces to
\begin{multline*} 
 \|\nabla y_{k+1}\|^2 \le (1+C_0R)  \|\nabla (y_{k+1}-\chi)\|^2  
+ 2C_0R \|\nabla \chi\| \|\nabla (y_{k+1} - \chi)\|  \\
+ \theta_k C_0R\|\nabla y_k\|\|\nabla \chi\| + 2C_0C_1(R+R_G+R_{res})\|\nabla y_k\|\|\nabla \chi\|,
\end{multline*}
thanks to Lemma \ref{lemma1}, \eqref{eqn:guubd}. Thus from Lemma \ref{lemma2}, we have \eqref{eqn:supconv1}. 
\end{proof}
\subsection{Anderson accelerated Newton's method with depth $m=2$}
In this subsection, we study the Anderson accelerated Newton's method with depth $m =2$ for solving steady NSE. The algorithm and analysis of convergence are presented here. 
\begin{alg}[Anderson accelerated Newton's iteration with depth $m = 2$ (AAN m=2)
]
\label{alg:aa2}
Algorithm of Anderson accelerated Newton's method with depth $m=2$ is stated as below:
\begin{enumerate}
\item[Step 0] Guess $u_0 \in B(0,R)$.
\item[Step 1] Compute $\tilde u_1 = G(u_0)$ and set residual $y_1 = \tilde u_1 - u_0$, update $ u_1 = \tilde u_1$.
\item[Step 2] This step consists three parts: 
\begin{enumerate}
\item[a)] Compute $\tilde u_2 = G(u_1)$ and set residual $y_2 = \tilde u_2 - u_1$.
\item[b)] Find $\alpha_2\in \mathbb{R}$ minimizing 
$$ \|\nabla ((1-\alpha_2)y_2 + \alpha_2 y_1)\|.$$
\item[c)] Update $u_2 = (1-\alpha_2)\tilde u_2 + \alpha_2 \tilde u_1$.
\end{enumerate}
\item[Step k] For $k = 3, 4,\dots$
\begin{enumerate}
\item[a)] Compute $\tilde u_k = G(u_{k-1})$ and set residual $y_k = \tilde u_k - u_{k-1}$.
\item[b)] Find $\beta_k^1,\beta_k^2\in \mathbb{R}$ minimizing 
$$ \|\nabla ((1-\beta_k^1-\beta_k^2) y_k + \beta_k^1 y_{k-1} + \beta_k^2 y_{k-2})\|.$$ 
\item[c)] Update $u_k = (1-\beta_k^1-\beta_k^2)\tilde u_{k} + \beta_k^1 \tilde u_{k-1}+\beta_k^2\tilde u_{k-2}.$
\end{enumerate}
\end{enumerate}
\end{alg}
For smooth analysis, we will use the following notations for the rest of subsection $$e_k=u_k - u_{k-1}, \quad \tilde e_k = \tilde u_k -\tilde u_{k-1}, \quad y_k^\beta = (1-\beta_k^1-\beta_k^2)y_k +   \beta_k^1 y_{k-1} + \beta_k^2 y_{k-2}.$$
It is obvious that Algorithm \ref{alg:aa2} is back to either Algorithm \ref{alg:newt} or Algorithm \ref{alg:aa1} at step $k$ whenever $\beta_k^2 = 0.$ So it is reasonable to make the following assumption. 
\begin{assumption} 
\label{assump2}
For step $k$ ($k\ge 3$), assume $\beta_k^2 \neq 0$ and $ u_j\in B(0,R)$ with $R< \nu M^{-1}$ for all $j\le k$.
\end{assumption}
%
%
Now let's give three lemmas that are analogue to the depth $m=1$ case.
\begin{lemma}
\label{lemma:beta}
Assume the Assumption \ref{assump2} holds, let $ \theta_k = \|\nabla y_k^\beta \| / \|\nabla y_k\|,$ then $\theta_k \in [0,1)$ and  $\beta_k^1, \beta_k^2$ from Algorithm \ref{alg:aa2} satisfy the following inequalities 
\begin{align}
\label{eqn:beta1}
|\beta_k^2 | \| \nabla (y_{k-1}-y_{k-2}))\|  & \le  (\sqrt{1-\theta_k^2} +| \beta_k^1+\beta_k^2|)\|\nabla y_k\|  + | \beta_k^1+\beta_k^2|\|\nabla  y_{k-1}\| .
\end{align}
\end{lemma}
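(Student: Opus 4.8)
The plan is to follow the template of Lemma \ref{lemma:alpha1} (the depth $m=1$ case), using the first-order optimality conditions of the least-squares step together with a Pythagorean identity. First I would abbreviate the two difference directions $d_1 = y_k - y_{k-1}$ and $d_2 = y_k - y_{k-2}$, so that $y_k^\beta = y_k - \beta_k^1 d_1 - \beta_k^2 d_2$ and the minimization in Step k(b) of Algorithm \ref{alg:aa2} becomes an unconstrained least-squares problem for the convex functional $g(\beta^1,\beta^2) = \|\nabla(y_k - \beta^1 d_1 - \beta^2 d_2)\|^2$. Setting the two partial derivatives to zero at the minimizer yields the orthogonality relations $(\nabla y_k^\beta, \nabla d_1) = 0$ and $(\nabla y_k^\beta, \nabla d_2) = 0$; that is, the optimal residual combination is $\nabla$-orthogonal to both correction directions.

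Next I would exploit this orthogonality through a Pythagorean decomposition. Since $y_k = y_k^\beta + (\beta_k^1 d_1 + \beta_k^2 d_2)$ and $y_k^\beta$ is $\nabla$-orthogonal to each $d_i$, it is orthogonal to the whole correction $\beta_k^1 d_1 + \beta_k^2 d_2$, so that
\begin{align*}
\|\nabla y_k\|^2 = \|\nabla y_k^\beta\|^2 + \|\nabla(\beta_k^1 d_1 + \beta_k^2 d_2)\|^2 .
\end{align*}
By the definition $\theta_k = \|\nabla y_k^\beta\|/\|\nabla y_k\|$ this rearranges to $\|\nabla(\beta_k^1 d_1 + \beta_k^2 d_2)\| = \sqrt{1-\theta_k^2}\,\|\nabla y_k\|$, the $m=2$ analogue of \eqref{eqn:alpha0}. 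The same identity gives $\theta_k \le 1$ immediately (its left-hand side is a squared norm, hence nonnegative), while $\theta_k \ge 0$ is automatic; the strict bound $\theta_k < 1$ follows as in the $m=1$ case from Assumption \ref{assump2}, since $\beta_k^2 \neq 0$ forces the correction to be a nonzero element of $V_h$, on which $\|\nabla \cdot\|$ is a genuine norm by the Poincar\'e inequality, whence $\|\nabla y_k^\beta\| < \|\nabla y_k\|$.

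Finally I would extract the stated estimate by a purely algebraic rearrangement. The key identity is
\begin{align*}
\beta_k^2(y_{k-1} - y_{k-2}) = (\beta_k^1 d_1 + \beta_k^2 d_2) - (\beta_k^1 + \beta_k^2)(y_k - y_{k-1}),
\end{align*}
which one verifies directly by expanding $d_1$ and $d_2$. Taking the gradient norm, applying the triangle inequality, inserting the Pythagorean identity from the previous step to control the first term by $\sqrt{1-\theta_k^2}\,\|\nabla y_k\|$, and bounding $\|\nabla(y_k - y_{k-1})\| \le \|\nabla y_k\| + \|\nabla y_{k-1}\|$ for the second, yields exactly \eqref{eqn:beta1}.

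I expect the only genuinely delicate point to be the strict inequality $\theta_k < 1$: one must rule out the degenerate situation in which $d_1$ and $d_2$ are $\nabla$-linearly dependent, for then the least-squares minimizer is non-unique and $\beta_k^2$ is not uniquely determined. Under Assumption \ref{assump2} this case is excluded implicitly, but it is worth flagging that the clean reduction to a two-term orthogonal projection rests on $\{d_1,d_2\}$ being independent; once the orthogonality relations are in hand, the remaining steps are routine bookkeeping.
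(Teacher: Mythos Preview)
Your proposal is correct and follows essentially the same route as the paper: first-order optimality of the least-squares step, a Pythagorean identity giving $\|\nabla(\text{correction})\|=\sqrt{1-\theta_k^2}\,\|\nabla y_k\|$, then a triangle inequality together with $\|\nabla(y_k-y_{k-1})\|\le\|\nabla y_k\|+\|\nabla y_{k-1}\|$. The only cosmetic difference is the choice of basis for the correction space: you take $d_1=y_k-y_{k-1}$, $d_2=y_k-y_{k-2}$, whereas the paper parameterizes by $\xi=\beta_k^1+\beta_k^2$, $\eta=\beta_k^2$ against $y_k-y_{k-1}$ and $y_{k-1}-y_{k-2}$, so it reads off $|\eta|\,\|\nabla(y_{k-1}-y_{k-2})\|$ directly from the triangle inequality without your intermediate algebraic identity.
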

\begin{proof}
It is obviously $\theta_k \in (0,1]$ from the minimization step.
Let $$f(\xi,\eta) = \|\nabla \left( y_k - \xi (y_k - y_{k-1}) -\eta (y_{k-1}- y_{k-2})\right) \|^2 ,$$ then $(\beta_k^1+ \beta_k^2,\beta_k^2)$ is a stationary point of  function $f$. Setting $$\frac{\partial }{\partial \xi} f = 0,  \ \frac{\partial }{\partial \eta} f = 0,$$ then $\xi = \beta_k^1+ \beta_k^2, \eta = \beta_k^2$ satisfy
\begin{align*}
\xi & = \frac{\|\nabla (y_{k-1}-y_{k-2})\|^2 (\nabla y_k, \nabla (y_k-y_{k-1})) - (\nabla (y_k-y_{k-1}), \nabla (y_{k-1}-y_{k-2}))(\nabla y_k ,\nabla ( y_{k-1}-y_{k-2}))}{\|\nabla (y_k-y_{k-1})\|^2\|\nabla (y_{k-1}-y_{k-2})\|^2 - (\nabla ( y_k-y_{k-1}),\nabla ( y_{k-1}-y_{k-2}))^2}, \\
\eta & =  \frac{\|\nabla (y_{k}-y_{k-1})\|^2 (\nabla y_k, \nabla (y_{k-1}-y_{k-2})) - (\nabla (y_k-y_{k-1}), \nabla (y_{k-1}-y_{k-2}))(\nabla y_k , \nabla (y_{k}-y_{k-1}))}{\|\nabla (y_k-y_{k-1})\|^2\|\nabla (y_{k-1}-y_{k-2})\|^2 - (\nabla ( y_k-y_{k-1}), \nabla (y_{k-1}-y_{k-2}))^2}.
\end{align*}
From the definition of $\theta_k$, we have 
\begin{align*}
 (1-\theta_k^2)\|\nabla y_k\|^2 
 & =\|\nabla (\xi(y_k-y_{k-1}) + \eta (y_{k-1}-y_{k-2}))\|^2,
\end{align*}
which implies 
\begin{align*}
|\beta_k^2| \|\nabla (y_{k-1}-y_{k-2})\| \le \sqrt{1-\theta_k^2}\|\nabla y_k\| + |\beta_k^1+\beta_k^2| \|\nabla (y_k-y_{k-1})\|,
\end{align*}
thanks to triangle inequality $\|\nabla (y_k - y_{k-1})\| \le \|\nabla y_k\| + \|\nabla y_{k-1}\|$.
\end{proof}
A few more useful identities given here besides \eqref{eqn:id12}:
\begin{align}
\label{eqn:id21}
y_{k-1} - y_{k-2} & = \tilde e_{k-1} - e_{k-2},\\
\label{eqn:id22}
y_k^\beta & = e_k + (\beta_k^1+\beta_k^2) e_{k-1} + \beta_k^2 e_{k-2}.
\end{align}
Note \eqref{eqn:id11} and \eqref{eqn:id13} do not hold here as the optimization step changes.
Now we bound errors $e_k$ by residuals $y_k$.
\begin{lemma}
\label{lemma3}
Let 
Assumption \ref{assump2} holds, then we have  
\begin{align}
\label{eqn:ek2bd2}
|\beta_k^2| \|\nabla e_{k-2}\|  & \le  
 C_3 (\sqrt{1-\theta_k^2} + |\beta_k^1+ \beta_k^2|) \|\nabla y_k\| +   (C_3|\beta_k^1+ \beta_k^2| + 8C_0R|\beta_k^2|)\|
  \nabla y_{k-1}\| ,\\
\label{eqn:ek1bd2}
 \|\nabla e_{k-1}\| &\le C_3 (\|\nabla y_k\|+ \|\nabla y_{k-1}\|),\\
\label{eqn:ekbd2}
\|\nabla e_k \| & \le (\sqrt{1+ C_3^2} + 2C_3  |\beta_k^1+ \beta_k^2| )\|\nabla y_k\|+ (2C_3  |\beta_k^1+ \beta_k^2| + 8C_0R|\beta_k^2|)\|\nabla y_{k-1}\|,
\end{align}
where $C_3 := \sqrt{2+2C_0R + 8C_0^2 R^2}.$
\end{lemma}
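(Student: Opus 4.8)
The plan is to mirror the proof of Lemma \ref{lemma1} (the depth $m=1$ analogue) and to establish the three estimates in the order \eqref{eqn:ek1bd2}, then \eqref{eqn:ek2bd2}, and finally \eqref{eqn:ekbd2}, because the bound on $e_k$ will be assembled from the other two through the identity \eqref{eqn:id22}. The engine throughout is the Newton-update equation \eqref{eqn:newt}: subtracting it at two consecutive indices produces an equation for $\tilde e_k$ whose first line depends only on \eqref{eqn:id12} and is therefore depth-independent and still available here, even though \eqref{eqn:id11} and \eqref{eqn:id13} are lost. Testing that equation against a suitable difference $e$ lets the skew-symmetry \eqref{bss} annihilate one trilinear term, while the polarization identity turns the viscous term into $\frac{\nu}{2}(\|\nabla\tilde e_k\|^2+\|\nabla e_{k-1}\|^2-\|\nabla(y_k-y_{k-1})\|^2)$.

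For \eqref{eqn:ek1bd2} I would take the $\tilde e_k$-equation, set $v=e_{k-1}$, and bound the surviving trilinear terms with \eqref{bbd}. The one subtlety relative to $m=1$ is that \eqref{eqn:id11} no longer lets me replace $\tilde u_k-u_k$ by $\alpha_k\tilde e_k$; instead I would split $\tilde u_k=u_{k-1}+y_k$ so that every velocity factor carrying a norm is an iterate lying in $B(0,R)$, giving coefficients bounded by $R$. This is precisely what collapses the $R_G$ and $R_{res}$ that appear in Lemma \ref{lemma1} into the clean $R$ inside $C_3=\sqrt{2+2C_0R+8C_0^2R^2}$. After Young's inequality and discarding the $\|\nabla\tilde e_k\|^2$ term I expect $\|\nabla e_{k-1}\|\le C_3\|\nabla(y_k-y_{k-1})\|$, and the triangle inequality $\|\nabla(y_k-y_{k-1})\|\le\|\nabla y_k\|+\|\nabla y_{k-1}\|$ then yields \eqref{eqn:ek1bd2}.

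The estimate \eqref{eqn:ek2bd2} I would obtain by running the same argument one index down, i.e.\ subtracting \eqref{eqn:newt} at $j=k-2,k-3$, testing with $v=e_{k-2}$ and invoking \eqref{eqn:id21}, to reach a raw bound of the form $\|\nabla e_{k-2}\|\le C_3\|\nabla(y_{k-1}-y_{k-2})\|+8C_0R\|\nabla y_{k-1}\|$, in which the second, genuinely new, summand comes from the lower-order cross term (of type $b(e_{k-2},y_{k-1},e_{k-2})$) that in the $e_{k-1}$ estimate is folded into $C_3$ but here is retained as an explicit residual. Multiplying by $|\beta_k^2|$ and converting $|\beta_k^2|\,\|\nabla(y_{k-1}-y_{k-2})\|$ into $\|\nabla y_k\|$ and $\|\nabla y_{k-1}\|$ through Lemma \ref{lemma:beta}, i.e.\ \eqref{eqn:beta1}, then delivers \eqref{eqn:ek2bd2}. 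Finally, for \eqref{eqn:ekbd2} I would rearrange \eqref{eqn:id22} as $e_k=y_k^\beta-(\beta_k^1+\beta_k^2)e_{k-1}-\beta_k^2 e_{k-2}$, take norms, use $\|\nabla y_k^\beta\|=\theta_k\|\nabla y_k\|$, and substitute \eqref{eqn:ek1bd2} and \eqref{eqn:ek2bd2}; the coefficient of $\|\nabla y_k\|$ is then maximized over $\theta_k$ via $\max_{\theta_k\in[0,1)}(\theta_k+C_3\sqrt{1-\theta_k^2})=\sqrt{1+C_3^2}$, exactly as in Lemma \ref{lemma1}.

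The main obstacle is the bookkeeping forced by the loss of \eqref{eqn:id11} and \eqref{eqn:id13}: in the $m=2$ setting the relation between $\tilde u_k$ and $u_k$ involves two previous differences, so the Newton-difference equation carries extra trilinear terms, and the cross terms that are quadratic in a single $e$ (those producing the new $8C_0R|\beta_k^2|$ contribution in \eqref{eqn:ek2bd2}) must be routed carefully. Keeping every velocity factor bounded by $R$ so the constant condenses to $C_3$, and correctly threading the $\theta_k$-dependence together with $\beta_k^1,\beta_k^2$ through Lemma \ref{lemma:beta}, is where the real effort lies; the individual trilinear and Young estimates are otherwise routine.
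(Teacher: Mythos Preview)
Your plan is correct and mirrors the paper's proof essentially step for step: the paper also establishes \eqref{eqn:ek1bd2} first (Newton-difference equation tested with $v=e_{k-1}$, polarization, Young, drop $\|\nabla\tilde e_k\|^2$), then \eqref{eqn:ek2bd2} (shift indices down, test with $v=e_{k-2}$, multiply by $|\beta_k^2|$ and invoke \eqref{eqn:beta1}), and finally assembles \eqref{eqn:ekbd2} from \eqref{eqn:id22} with the same maximization $\max_{\theta_k}(\theta_k+C_3\sqrt{1-\theta_k^2})=\sqrt{1+C_3^2}$. One minor caveat: the intermediate bound $\|\nabla e_{k-1}\|\le C_3\|\nabla(y_k-y_{k-1})\|$ you anticipate is slightly too sharp, since after setting $v=e_{k-1}$ the surviving trilinear terms $b(u_{k-1},y_k,e_{k-1})$ and $b(u_{k-2},y_{k-1},e_{k-1})$ contribute $\|\nabla y_k\|$ and $\|\nabla y_{k-1}\|$ separately rather than through their difference (the paper gets $\|\nabla e_{k-1}\|^2\le 2(1+C_0R)\|\nabla(y_k-y_{k-1})\|^2+8C_0^2R^2(\|\nabla y_k\|^2+\|\nabla y_{k-1}\|^2)$); but since you immediately pass to $\|\nabla y_k\|+\|\nabla y_{k-1}\|$ anyway this is harmless.
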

\begin{proof}
We first show inequality \eqref{eqn:ek1bd2}. Subtracting \eqref{eqn:newt} with $j = k-2$ from \eqref{eqn:newt} with $j = k-1$ yields 
  \begin{align*}
 b(\tilde e_k, u_{k-2},v) + b(\tilde u_k, e_{k-1},v) + b(u_{k-1},y_k,v) - b(u_{k-2},y_{k-1},v)  + \nu(\nabla \tilde e_k,\nabla v) = 0.
 \end{align*}
 Setting $v = e_{k-1}$ eliminates the second term and produces
 \begin{multline*}
  \frac{\nu}2(1-\nu^{-1}MR)(\|\nabla \tilde e_k\|^2 + \|\nabla e_{k-1}\|^2) \\
   \le \frac{\nu}2 \|\nabla (y_k -y_{k-1})\|^2 + MR \|\nabla y_k\| \|\nabla e_{k-1}\| + MR\|\nabla y_{k-1}\| \|\nabla e_{k-1}\| ,
 \end{multline*}
 thanks to polarization identity, $ab \le \frac{a^2 + b^2}2$ and \eqref{bbd}. Applying the Young's inequality and dropping the term with $\|\nabla \tilde e_k\|^2$ yields
 \begin{align*}
   \|\nabla e_{k-1}\|^2 
  &\le 2(1+C_0R)\|\nabla (y_k-y_{k-1})\|^2 + 8C_0^2R^2(\|\nabla y_k\|^2+ \|\nabla y_{k-1}\|^2) \\
  &\le 2(1+C_0R) (\|\nabla y_k\|  + \|\nabla y_{k-1}\|)^2 + 8C_0^2R^2(\|\nabla y_k\|+ \|\nabla y_{k-1}\|)^2 \\
 &\le 2(1+C_0R+4C_0^2R^2) (\|\nabla y_k\|+ \|\nabla y_{k-1}\|)^2.
 \end{align*}
Taking square on both sides leads to \eqref{eqn:ek1bd2}.

 Now we prove \eqref{eqn:ek2bd2}. Subtracting \eqref{eqn:newt} with $j = k-3$ from \eqref{eqn:newt} with $j = k-2$ yields 
 \begin{align*}
 b(\tilde e_{k-1}, u_{k-2},v) + b(\tilde u_{k-2}, e_{k-2}, v) + b(u_{k-3}, y_{k-1}-y_{k-2}, v) + b(e_{k-2}, y_{k-1},v) + \nu (\nabla \tilde e_{k-1}, \nabla v) = 0,
 \end{align*}
 using \eqref{eqn:id21}. 
 Setting $v = e_{k-2}$ eliminate the second term and gives
 \begin{multline*}
 \frac{\nu}2(1-\nu^{-1}MR)(\|\nabla \tilde e_{k-1}\|^2 + \|\nabla e_{k-2}\|^2) \\
 \le \frac{\nu}2 \|\nabla (y_{k-1}- y_{k-2})\|^2 +
 MR\|\nabla (y_{k-1}-y_{k-2})\|\|\nabla e_{k-2}\| + 2MR\|\nabla y_{k-1}\| \|\nabla e_{k-2}\| ,
 \end{multline*}
 thanks to the polarization identity, \eqref{bbd} and \eqref{bss}.
 Applying the Young's inequality and dropping the term with $\|\nabla \tilde e_{k-1}\|^2$ yields
 \begin{align*}
  \|\nabla e_{k-2}\|
  \le & \sqrt{C_3^2\|\nabla (y_{k-1}- y_{k-2})\|^2 + 32 C_0^2R^2 \|\nabla y_{k-1}\|^2}  \\
\le &   C_3\|\nabla (y_{k-1} - y_{k-2})\| + 8C_0R\|\nabla y_{k-1}\|.
  \end{align*}
  Multiplying both sides by $|\beta_k^2|$ and applying \eqref{eqn:beta1} produces
 \begin{align*}
   |\beta_k^2|\|\nabla e_{k-2}\|  
  \le & C_3 (\sqrt{1-\theta_k^2} + |\beta_k^1+ \beta_k^2|) \|\nabla y_k\| +   (C_3|\beta_k^1+ \beta_k^2| + 8C_0R|\beta_k^2|)\|
  \nabla y_{k-1}\|.
  \end{align*}

 Lastly, we show \eqref{eqn:ekbd2}. From \eqref{eqn:id22}, we have
 \begin{align*}
 \|\nabla e_k\| & \le \theta_k\|\nabla y_k\| + |\beta_k^1+\beta_k^2|\|\nabla e_{k-1}\| + |\beta_k^2|\|\nabla e_{k-2}\| \\
 & \le (\theta_k + 2C_3  |\beta_k^1+ \beta_k^2| + C_3\sqrt{1-\theta_k^2}) \|\nabla y_k\| + (2C_3  |\beta_k^1+ \beta_k^2| + 8C_0R|\beta_k^2|)\|\nabla y_{k-1}\| \\
 &\le (\sqrt{1+ C_3^2} + 2C_3  |\beta_k^1+ \beta_k^2| )\|\nabla y_k\|+ (2C_3  |\beta_k^1+ \beta_k^2| + 8C_0R|\beta_k^2|)\|\nabla y_{k-1}\|,
 \end{align*}
due to $\max\limits_{\theta\in[0,1)}\{ \theta + C_3 \sqrt{1-\theta^2}\} = \sqrt{1+C_3^2}.$
\end{proof}
\begin{lemma}
\label{lemma4}
Let Assumption \ref{assump2} holds, then we have
\begin{align}
\label{eqn:yk3}
\|\nabla (G'(u_{k-1};e_k) + (\beta_k^1+\beta_k^2)G'(u_{k-2};e_{k-1})+ \beta_k^2 G'(u_{k-3}; e_{k-2}))\| \le  \mathcal{O}((\|\nabla y_k\| + \|\nabla y_{k-1}\|)^2),
\end{align}
and 
\begin{multline}
\label{eqn:yk4}
\| \nabla ( y_{k+1} - G'(u_{k-1}; e_k) - (\beta_k^1+\beta_k^2) G'(u_{k-2};e_{k-1}) -\beta_k^2 G'(u_{k-3};  e_{k-2}) ) \| 
\\ \le  \mathcal{O}((\|\nabla y_k\| + \|\nabla y_{k-1}\|)^3).
\end{multline}
\end{lemma}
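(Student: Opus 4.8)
The plan is to follow the depth-one argument of Lemma \ref{lemma2} essentially verbatim, replacing its two-term structure by the three-term structure forced by the depth-two update and using Lemma \ref{lemma3} wherever Lemma \ref{lemma1} was used. For the first estimate \eqref{eqn:yk3} I would apply the Fr\'echet bound \eqref{eqn:dgbd} to each of the three summands of $\chi:=G'(u_{k-1};e_k)+(\beta_k^1+\beta_k^2)G'(u_{k-2};e_{k-1})+\beta_k^2 G'(u_{k-3};e_{k-2})$ separately. The point that makes the bounds close is the identity $G(u_{j-1})-u_{j-1}=\tilde u_j-u_{j-1}=y_j$, so that \eqref{eqn:dgbd} reads $\|\nabla G'(u_{j-1};e_j)\|\le 2C_0\|\nabla y_j\|\|\nabla e_j\|+C_0\|\nabla e_j\|^2$ for $j=k,k-1,k-2$. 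Weighting these by $1,\ |\beta_k^1+\beta_k^2|,\ |\beta_k^2|$ and feeding in the error bounds \eqref{eqn:ek1bd2}, \eqref{eqn:ekbd2} for $\|\nabla e_k\|,\|\nabla e_{k-1}\|$ and the product bound \eqref{eqn:ek2bd2} for $|\beta_k^2|\|\nabla e_{k-2}\|$ should collect into \eqref{eqn:yk3}.

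For the second estimate \eqref{eqn:yk4} I would first establish the residual identity by substituting the update $u_k=(1-\beta_k^1-\beta_k^2)\tilde u_k+\beta_k^1\tilde u_{k-1}+\beta_k^2\tilde u_{k-2}$ into $y_{k+1}=\tilde u_{k+1}-u_k$ and regrouping, which yields
\begin{align*}
y_{k+1}=\tilde e_{k+1}+(\beta_k^1+\beta_k^2)\tilde e_k+\beta_k^2\tilde e_{k-1},
\end{align*}
the depth-two analogue of $y_{k+1}=\tilde e_{k+1}+\alpha_k\tilde e_k$. Writing $\psi_j:=G(u_j)-G(u_{j-1})-G'(u_{j-1};e_j)=\tilde e_{j+1}-G'(u_{j-1};e_j)$, the quantity inside \eqref{eqn:yk4} then telescopes exactly to $\psi_k+(\beta_k^1+\beta_k^2)\psi_{k-1}+\beta_k^2\psi_{k-2}$. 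Each $\psi_j$ is controlled by the Fr\'echet error estimate \eqref{eqn:fdg} in the form $\|\nabla\psi_j\|\le 2C_0\|\nabla e_j\|\|\nabla\tilde e_{j+1}\|$, and the differences $\|\nabla\tilde e_{j+1}\|$ are bounded by the quadratic Newton estimate \eqref{eqn:errbd} together with Lemma \ref{lemma3}, just as in Lemma \ref{lemma2}.

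The main obstacle is the oldest summand, the one carrying $\beta_k^2$, $e_{k-2}$ and $u_{k-3}$, and two of its features demand care. First, Lemma \ref{lemma3} controls only the product $|\beta_k^2|\|\nabla e_{k-2}\|$ rather than $\|\nabla e_{k-2}\|$ alone (since $\beta_k^2$ may be small), so each time a bound produces $\|\nabla e_{k-2}\|^2$ one must reintroduce a compensating factor $1/|\beta_k^2|$ and keep it inside the constant; this is why the final constants must depend on $\beta_k^2$. Second, the Fr\'echet bound for $G'(u_{k-3};e_{k-2})$, and the estimate \eqref{eqn:errbd} for $\tilde e_{k-1}$, both introduce the older residual $\|\nabla y_{k-2}\|$, which is only controlled through the a priori constant $R_{res}$ from \eqref{eqn:guubd} and not through the current residuals $\|\nabla y_k\|,\|\nabla y_{k-1}\|$. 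Keeping track of this factor while still arriving at the stated orders in $(\|\nabla y_k\|+\|\nabla y_{k-1}\|)$ is the delicate part of the bookkeeping; apart from it, the proof introduces no idea beyond the depth-one case.
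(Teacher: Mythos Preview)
Your plan matches the paper's proof exactly in structure: bound each of the three Fr\'echet terms by \eqref{eqn:dgbd}, write $y_{k+1}=\tilde e_{k+1}+(\beta_k^1+\beta_k^2)\tilde e_k+\beta_k^2\tilde e_{k-1}$ so that the quantity in \eqref{eqn:yk4} telescopes to $\psi_k+(\beta_k^1+\beta_k^2)\psi_{k-1}+\beta_k^2\psi_{k-2}$, and close each $\psi_j$ with \eqref{eqn:fdg} together with \eqref{eqn:errbd}. The only place where your proposal is not yet a proof is precisely the point you flag as ``delicate'', and the fix you suggest---absorbing $\|\nabla y_{k-2}\|$ into $R_{res}$---does \emph{not} work: it costs a full order and turns, e.g., $2C_0\|\nabla y_{k-2}\|\,|\beta_k^2|\,\|\nabla e_{k-2}\|$ into a term that is only linear in $\|\nabla y_k\|+\|\nabla y_{k-1}\|$, so \eqref{eqn:yk3} would fail.

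Two observations close the gap. First, for $\tilde e_{k-1}$ you should apply \eqref{eqn:errbd} (or rather \eqref{eqn:newterr}) with the roles of $u$ and $w$ interchanged, taking $u=u_{k-2}$ and $w=u_{k-3}$; then the residual that appears is $G(u_{k-2})-u_{k-2}=y_{k-1}$, not $y_{k-2}$, and one obtains directly
\[
\|\nabla\tilde e_{k-1}\|\le 2C_0\|\nabla e_{k-2}\|\,\|\nabla y_{k-1}\|+C_0\|\nabla e_{k-2}\|^2,
\]
which is what the paper uses. Second, for $|\beta_k^2|\,\|\nabla G'(u_{k-3};e_{k-2})\|$ the factor $\|\nabla y_{k-2}\|$ is genuinely present, but Lemma~\ref{lemma:beta} and the triangle inequality give
\[
|\beta_k^2|\,\|\nabla y_{k-2}\|\le|\beta_k^2|\,\|\nabla y_{k-1}\|+|\beta_k^2|\,\|\nabla(y_{k-1}-y_{k-2})\|=\mathcal O(\|\nabla y_k\|+\|\nabla y_{k-1}\|),
\]
so in the product $2C_0\,\bigl(|\beta_k^2|\|\nabla y_{k-2}\|\bigr)\,\|\nabla e_{k-2}\|$ the first factor is already first order in the current residuals, and the remaining $\|\nabla e_{k-2}\|$ is handled exactly by your $1/|\beta_k^2|$ trick via \eqref{eqn:ek2bd2}. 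With these two points your outline becomes the paper's proof.
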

\begin{proof}
From \eqref{eqn:dgbd} and Lemma \ref{lemma3}, we have
\begin{align*}
 \|\nabla G'(u_{k-1};e_k)\|  
\le  & 2C_0\|\nabla y_k\|\|\nabla e_k\| + C_0\|\nabla e_k\|^2 \\
\le & \mathcal{O}(\|\nabla y_k\|(\|\nabla y_k\| + \|\nabla y_{k-1}\|)) +\mathcal{O}( (\|\nabla y_k\| + \|\nabla y_{k-1}\|)^2), \\
  |\beta_k^1+\beta_k^2| \|\nabla G'(u_{k-2};e_{k-1})\|  
  \le & 2C_0\|\nabla y_{k-1}\|  |\beta_k^1+\beta_k^2|  \|\nabla e_{k-1}\|  + C_0 |\beta_k^1+\beta_k^2|  \|\nabla e_{k-1}\|^2 \\
\le &\mathcal{O}(\|\nabla y_{k-1}\| (\|\nabla y_k\| + \|\nabla y_{k-1}\|) ) + \mathcal{O}((\|\nabla y_k\| + \|\nabla y_{k-1}\|)^2),
\end{align*}
and
\begin{align*}
|\beta_k^2|\|\nabla G'(u_{k-3}; e_{k-2})\| &\le 2C_0\|\nabla y_{k-1}\| |\beta_k^2| \|\nabla e_{k-2}\| + C_0|\beta_k^2|\|\nabla e_{k-2}\|^2 \\
& \le \mathcal{O}( \|\nabla y_{k-1}\| (\|\nabla y_k\| + \|\nabla y_{k-1}\|)) + \mathcal{O}((\|\nabla y_k\| + \|\nabla y_{k-1}\|)^2) .
\end{align*}
Combining the above three inequalities, we have 
\begin{align*}
&\|\nabla (G'(u_{k-1};e_k) + (\beta_k^1+\beta_k^2)G'(u_{k-2};e_{k-1})+ \beta_k^2 G'(u_{k-3}; e_{k-2}))\| \\
\le & \|\nabla G'(u_{k-1};e_k)\| +  |\beta_k^1+\beta_k^2| \|\nabla G'(u_{k-2};e_{k-1})\|  + |\beta_k^2| \|\nabla G'(u_{k-3}; e_{k-2})\|  \\
\le & \mathcal{O}((\|\nabla y_k\| + \|\nabla y_{k-1}\|)^2).
\end{align*}
For notation simplification, denote $\psi_k = G(u_k)-G(u_{k-1}) - G'(u_{k-1};e_k)$. Utilizing identity 
$y_{k+1} = \tilde e_{k+1} + (\beta_k^1+\beta_k^2) \tilde e_k + \beta_k^2 \tilde e_{k-1}$ yields
\begin{align*}
 &y_{k+1} - G'(u_{k-1}; e_k) -( \beta_k^1+\beta_k^2) G'(u_{k-2};e_{k-1}) -\beta_k^2 G'(u_{k-3};  e_{k-2})  \\
 = & \psi_k + (\beta_k^1+\beta_k^2) \psi_{k-1} + \beta_k^2 \psi_{k-2}.
\end{align*}
From equation \eqref{eqn:errbd} and Lemma \ref{lemma3}, we have the following three inequalities
\begin{align*}
\|\nabla \tilde e_{k+1}\| & \le 2C_0\|\nabla e_k\| \|\nabla y_k\| + C_0\|\nabla e_k\|^2  \le  \mathcal{O}((\|\nabla y_k\| + \|\nabla y_{k-1}\|)^2),\\
\|\nabla \tilde e_k\| & \le 2C_0\|\nabla e_{k-1}\| \|\|\nabla y_k\| + C_0\|\nabla e_{k-1}\|^2  \le  \mathcal{O}((\|\nabla y_k\| + \|\nabla y_{k-1}\|)^2),\\
\|\nabla \tilde e_{k-1}\| & \le 2C_0\|\nabla e_{k-2}\| \|\nabla y_{k-1}\| + C_0\|\nabla e_{k-2}\|^2  \le \mathcal{O}((\|\nabla y_k\| + \|\nabla y_{k-1}\|)^2).
\end{align*}
Utilizing \eqref{eqn:fdg} gives
\begin{align*}
& \|\nabla (y_{k+1} - G'(u_{k-1}; e_k) -( \beta_k^1+\beta_k^2) G'(u_{k-2};e_{k-1}) -\beta_k^2 G'(u_{k-3};  e_{k-2}) )\|\\ 
\le & \|\nabla  \psi_k\| +  |\beta_k^1+\beta_k^2|\|\nabla  \psi_{k-1} \|+ |\beta_k^2|\|\nabla  \psi_{k-2}\| \\
\le & 2C_0\|\nabla e_k\| \|\nabla \tilde e_{k+1}\| + 2|\beta_k^1+\beta_k^2| C_0\|\nabla e_{k-1}\| \|\nabla \tilde e_k\| + 2C_0|\beta_k^2|\|\nabla e_{k-2}\| \|\nabla \tilde e_{k-1}\| \\
\le & \mathcal{O}( ( \|\nabla y_k\| +\|\nabla y_{k-1}\|)^3).
\end{align*}
\end{proof}
Now we manifest that Anderson accelerated Newton's method with depth 2 also converges superlinearly.
\begin{thm}[One-step residual bound for $m=2$] Let Assumption \ref{assump2} holds, then the residual sequence $\{y_k\}$ from Algorithm \ref{alg:aa2} satisfies
$$ \|\nabla y_{k+1}\| \le  \mathcal{O}((\|\nabla y_k\| + \|\nabla y_{k-1}\|)^{3/2}),$$
where the bound depends on parameters $\nu, |\Omega|, R, f , \beta_k^1, \beta_k^2,\theta_k$.
\label{thm:aa2}
\end{thm}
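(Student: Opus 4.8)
The plan is to follow the blueprint of Theorem~\ref{thm:aa1}, replacing the two-term affine combination by the three-term combination prescribed by Algorithm~\ref{alg:aa2}. First I would derive a variational equation satisfied by $u_k$. Writing \eqref{eqn:newt} at $j=k-1$, $j=k-2$ and $j=k-3$ (which define $\tilde u_k$, $\tilde u_{k-1}$ and $\tilde u_{k-2}$ respectively), I would take the linear combination with weights $1-\beta_k^1-\beta_k^2$, $\beta_k^1$ and $\beta_k^2$. The linear-in-$\tilde u$ parts then assemble exactly into $u_k=(1-\beta_k^1-\beta_k^2)\tilde u_k+\beta_k^1\tilde u_{k-1}+\beta_k^2\tilde u_{k-2}$, while the nonlinear terms do not collapse cleanly; exactly as in the derivation of \eqref{eqn4} for $m=1$, they leave trilinear remainders built from $e_{k-1}$, $e_{k-2}$, $y_k^\beta$ and the residual differences, which I would organize using \eqref{eqn:id12}, \eqref{eqn:id21} and \eqref{eqn:id22}.

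Next I would subtract this combined equation from \eqref{eqn:newt} at $j=k$, which defines $\tilde u_{k+1}=G(u_k)$ and hence $y_{k+1}=\tilde u_{k+1}-u_k$. This isolates $b(u_k,y_{k+1},v)+b(y_{k+1},u_k,v)+\nu(\nabla y_{k+1},\nabla v)$ against the collected lower-order trilinear remainders. I would then test with
$$\chi := G'(u_{k-1};e_k)+(\beta_k^1+\beta_k^2)G'(u_{k-2};e_{k-1})+\beta_k^2 G'(u_{k-3};e_{k-2}),$$
the precise combination analyzed in Lemma~\ref{lemma4}. The key algebraic device, inherited from the $m=1$ proof, is that skew-symmetry \eqref{bss} gives $b(u_k,\chi,\chi)=0$, so $b(u_k,y_{k+1},\chi)=b(u_k,y_{k+1}-\chi,\chi)$ is controlled by the higher-order remainder $y_{k+1}-\chi$. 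Applying the polarization identity to the $\nu$-term, Young's inequality to $b(y_{k+1},u_k,\chi)$, the bound \eqref{bbd} together with the a priori estimates $\|\nabla u_j\|\le R$ and $\|\nabla\tilde u_j\|\le R_G$ from Lemma~\ref{lemma:range} on the remaining trilinear terms, and then Lemma~\ref{lemma3} to convert each $\|\nabla e_j\|$ into residuals, I expect a quadratic inequality of the schematic form
$$\|\nabla y_{k+1}\|^2\le C\left(\|\nabla(y_{k+1}-\chi)\|^2+\|\nabla\chi\|\,\|\nabla(y_{k+1}-\chi)\|+(\|\nabla y_k\|+\|\nabla y_{k-1}\|)\,\|\nabla\chi\|\right),$$
after the $\|\nabla y_{k+1}\|^2$ produced by Young is absorbed on the left using $R<\nu M^{-1}$.

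To close the estimate I would insert the two bounds from Lemma~\ref{lemma4}: \eqref{eqn:yk3} gives $\|\nabla\chi\|=\mathcal{O}(S^2)$ and \eqref{eqn:yk4} gives $\|\nabla(y_{k+1}-\chi)\|=\mathcal{O}(S^3)$, where $S:=\|\nabla y_k\|+\|\nabla y_{k-1}\|$. The three groups on the right are then $\mathcal{O}(S^6)$, $\mathcal{O}(S^5)$ and $\mathcal{O}(S^3)$, so the cubic group dominates and $\|\nabla y_{k+1}\|^2=\mathcal{O}(S^3)$; taking square roots yields $\|\nabla y_{k+1}\|=\mathcal{O}(S^{3/2})$, with the implied constant inheriting the dependence on $\nu,|\Omega|,R,f,\beta_k^1,\beta_k^2,\theta_k$ through Lemmas~\ref{lemma3}--\ref{lemma4}.

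The main obstacle is the algebraic bookkeeping in the first two steps: identifying exactly which trilinear remainders survive the three-term combination, and verifying that testing against $\chi$ removes the genuinely linear-in-$y_{k+1}$ contribution so that $y_{k+1}-\chi$ is authentically cubic rather than merely quadratic --- this cancellation is precisely what Lemma~\ref{lemma4} certifies. A secondary technical point is that reciprocal factors such as $1/|\beta_k^2|$ enter through Lemma~\ref{lemma3}; these are harmless because $\beta_k^2\neq0$ under Assumption~\ref{assump2} and are absorbed into the final constant $C$.
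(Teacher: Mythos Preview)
Your proposal is correct and follows essentially the same approach as the paper: you take the same three-term affine combination of \eqref{eqn:newt}, subtract it from \eqref{eqn:newt} at $j=k$, test with the same $\chi$, and close with Lemmas~\ref{lemma3} and~\ref{lemma4} exactly as the paper does. The only cosmetic difference is that in the paper the $e_{k-2}$-terms always appear already multiplied by $|\beta_k^2|$, so the reciprocal $1/|\beta_k^2|$ you flag never actually surfaces in the final estimate.
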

\begin{proof}
Start with constructing $u_k$. Adding $1-\beta_k^1-\beta_k^2$ multiple of \eqref{eqn:newt} with $j = k-1$, $\beta_k^1$ multiple of \eqref{eqn:newt} with $j = k-2$ to $\beta_k^2$ multiple of \eqref{eqn:newt} with $j =k-3$ gives
\begin{multline*}
b(y_k , u_{k-1},v) -(\beta_k^1+\beta_k^2)b(y_k-y_{k-1},u_{k-2},v) - (\beta_k^1+\beta_k^2)b(y_k,e_{k-1},v)  \\
-\beta_k^2b(y_{k-1}-y_{k-2},u_{k-3},v)  -\beta_k^2b(y_{k-1},e_{k-2},v) 
+  b(u_{k-1},u_k,v) -\beta_k^1b(e_{k-1},\tilde u_{k-1}, v) \\
 - \beta_k^2 b(e_{k-1}+e_{k-2}, \tilde u_{k-2},v) 
+ \nu(\nabla u_k,\nabla v) = (f,v).
\end{multline*}
Subtracting it from \eqref{eqn:newt} with $j = k$ yields
\begin{multline*}
b(y_{k+1}, u_k,v) + b(e_k,u_k,v) + b(u_k, y_{k+1},v) - b(y_k , u_{k-1},v) + (\beta_k^1+\beta_k^2)b(y_k-y_{k-1},u_{k-2},v) \\
 + (\beta_k^1+\beta_k^2)b(y_k,e_{k-1},v) + \beta_k^2b(y_{k-1}-y_{k-2},u_{k-3},v) 
+\beta_k^2b(y_{k-1},e_{k-2},v) \\
 + b(e_{k-1},  \beta_k^1\tilde u_{k-1} + \beta_k^2\tilde u_{k-2},v) + \beta_k^2 b(e_{k-2}, \tilde u_{k-2}, v)
+ \nu (\nabla y_{k+1}, \nabla v) = 0.
\end{multline*}
Setting $v  = \chi := G'(u_{k-1}; e_k) + (\beta_k^1+\beta_k^2) G'(u_{k-2};e_{k-1}) +\beta_k^2 G'(u_{k-3};e_{k-2})$ produces
\begin{align*} 
& \frac{\nu}2 (\|\nabla y_{k+1}\|^2 + \|\nabla \chi\|^2 - \|\nabla (y_{k+1} -\chi )\|^2)  \\
\le &  
MR\|\nabla y_{k+1}\| \|\nabla \chi\| + MR\|\nabla e_k\| \|\nabla \chi\| + MR\|\nabla (y_{k+1} - \chi)\| \|\nabla \chi\|  
+MR\|\nabla y_k\| \|\nabla \chi\|  \\
&
+ |\beta_k^1+\beta_k^2| MR\|\nabla (y_k - y_{k-1})\| \|\nabla \chi\| 
+ |\beta_k^1+\beta_k^2| M\|\nabla y_k \| \|\nabla e_{k-1}\| \|\nabla \chi\|   \\
& + |\beta_k^2|MR  \|\nabla (y_{k-1}-y_{k-2})\| \|\nabla  \chi\| + |\beta_k^2| M\|\nabla y_{k-1}\| \|\nabla e_{k-2}\|\|\nabla \chi\|  \\
&  + M \|\nabla (u_k - (1-\beta_k^1-\beta_k^2)\tilde u_k)\| \|\nabla e_{k-1}\| \|\nabla \chi\| + |\beta_k^2|MR_G\|\nabla e_{k-2}\| \|\nabla \chi\|,
\end{align*}
thanks to polarization identity, \eqref{bss}, \eqref{bbd}. 
Applying $ \|\nabla (u_k - (1-\beta_k^1-\beta_k^2)\tilde u_k)\|  \le R+ (1+|\beta_k^1+\beta_k^2|)R_G $, Young's inequality,  and dropping $\|\nabla \chi\|^2$ term, we obtain
\begin{align*}
  \|\nabla y_{k+1}\|^2  
 \le &  (1+C_0R)  \|\nabla (y_{k+1} -\chi )\|^2 +  2C_0R\|\nabla (y_{k+1} - \chi)\| \|\nabla \chi\| \\
 &+4C_0R (1+  |\beta_k^1+\beta_k^2|) (\|\nabla y_k\| +\|\nabla y_{k-1}\| )\|\nabla \chi\| \\
 &+  2C_0R\|\nabla e_k\| \|\nabla \chi\|  + 2C_0 \|\nabla y_k\| |\beta_k^1+\beta_k^2|\|\nabla e_{k-1}\|\|\nabla \chi\| \\
 & + 2C_0(R + R_G (1+  |\beta_k^1+\beta_k^2|))\|\nabla e_{k-1}\| \|\nabla \chi\| \\
 & + 2C_0\|\nabla y_{k-1}\| |\beta_k^2| \|\nabla e_{k-2}\|\|\nabla \chi\|   + 2C_0R_G |\beta_k^2| \|\nabla e_{k-2}\| \|\nabla \chi\|
 \\ 
\le & \mathcal{O}((\|\nabla y_k\| + \|\nabla y_{k-1}\|)^3).
 \end{align*}
 thanks to Lemma \ref{lemma:beta}, Lemma \ref{lemma3} and Lemma \ref{lemma4}.
\end{proof}
\subsection{Anderson accelerated Newton's method with depth $m$}
This subsection states the algorithm and one-step convergence result of the general Anderson acceleration applied to Newton's method for solving steady Navier-Stokes equations. The analysis would be similar to the previous two subsections but involving more complicate constants, and so here we omit it.
\begin{alg}[Anderson accelerated Newton's iteration with depth $m$ for NSE]
\label{alg:aam}
Algorithm of Anderson accelerated Newton's method with depth $m$ is stated as below:
\begin{enumerate}
\item[Step 0] Guess $u_0 \in B(0,R)$.
\item[Step 1] Compute $\tilde u_1 = G(u_0)$ and set residual $y_1 = \tilde u_1 - u_0$, update $u_1 = \tilde u_1$.
\item[Step k] For $k = 2,3,\dots$, set $m_k = \min\{k-1,m\}$
\begin{enumerate}
\item[a)] Compute $\tilde u_k = G(u_{k-1})$ and set $y_k = \tilde u_k - u_{k-1}$.
\item[b)] Find $\{\gamma_k^i\}_{i=1}^{m_k}\subset \mathbb{R}$ minimizing 
$$ \left\| \nabla \left( \left(1-\sum\limits_{i=1}^{m_k} \gamma_k^i\right) y_k +  \sum\limits_{i=1}^{m_k}  \gamma_k^i y_{k-i}  \right) \right\|.$$
\item[c)] Update $u_k = \left(1-\sum\limits_{i=1}^{m_k} \gamma_k^i\right) \tilde u_k +  \sum\limits_{i=1}^{m_k}  \gamma_k^i \tilde u_{k-i}$.
\end{enumerate}
\end{enumerate}
\end{alg}
Clearly, Algorithm \ref{alg:aam} is back to AAN with small Anderson depth or Newton's method if $\gamma_k^m =0$ for any $k\ge m+1$. So we will assume $\gamma_k^m \neq 0$.

\begin{thm}[One-step residual bound]
\label{thm:aa3}
 Assume for any step $k>m$ with $\gamma_k^m \neq 0$ and $u_j \in B(0,R)$ with $R < \nu M^{-1}$ for all $j\le k$, then 
$$ \|\nabla y_{k+1}\| \le \mathcal{O}\left( \left(\sum\limits_{i =1}^{m}\|\nabla y_{k-i+1}\| \right)^{3/2} \right),$$
where the bound depends on parameters $\nu, |\Omega|, R, f , \gamma_k^i,\theta_k$.
\end{thm}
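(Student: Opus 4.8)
The plan is to reproduce the architecture of the depth $m=1$ and $m=2$ arguments, replacing the two explicit coefficients by the family $\{\gamma_k^i\}_{i=1}^{m}$ and carrying the combinatorics symbolically. Throughout I write $e_k = u_k - u_{k-1}$, $\tilde e_k = \tilde u_k - \tilde u_{k-1}$, and $y_k^\gamma = (1-\sum_{i=1}^m \gamma_k^i)y_k + \sum_{i=1}^m \gamma_k^i y_{k-i}$, and I abbreviate $S_k := \sum_{i=1}^m \|\nabla y_{k-i+1}\|$ for the quantity on the right-hand side of the claim. The first step is to characterize the minimization coefficients, the analogue of Lemmas \ref{lemma:alpha1} and \ref{lemma:beta}. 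Writing $d_j := y_{k-j}-y_{k-j-1}$ and using $y_k - y_{k-i} = \sum_{j=0}^{i-1} d_j$, the update reads $y_k^\gamma = y_k - \sum_{j=0}^{m-1}(\sum_{i>j}\gamma_k^i)d_j$, so that $y_k^\gamma$ is the $\|\nabla\cdot\|$-orthogonal projection residual of $y_k$ onto the complement of $\mathrm{span}\{d_0,\dots,d_{m-1}\}$. The Pythagorean identity then yields $\theta_k\in[0,1)$ with $\|\nabla(y_k-y_k^\gamma)\| = \sqrt{1-\theta_k^2}\,\|\nabla y_k\|$. Solving the $m\times m$ normal equations (as Cramer's rule was used for $m=2$) together with $\|\nabla d_j\|\le \|\nabla y_{k-j}\|+\|\nabla y_{k-j-1}\|$ would bound each $|\gamma_k^i|$ times the appropriate difference by $\sqrt{1-\theta_k^2}\,\|\nabla y_k\|$ plus lower-order terms in the remaining residuals.

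The second step bounds the errors by the residuals, the analogue of Lemmas \ref{lemma1} and \ref{lemma3}. For each $i$ I subtract \eqref{eqn:newt} at $j=k-i-1$ from the one at $j=k-i$, test with $v=e_{k-i}$ to kill the symmetric trilinear term via \eqref{bss}, and use the polarization identity with \eqref{bbd} and Young's inequality; this gives $\|\nabla e_{k-i}\|\lesssim \|\nabla d_{i-1}\|+\|\nabla y_{k-i}\|$, and hence, after multiplying by $|\gamma_k^i|$ and invoking step one, $|\gamma_k^i|\,\|\nabla e_{k-i}\|\lesssim S_k$ and $\|\nabla e_k\|\lesssim S_k$, with constants depending only on $\nu,M,R,\|f\|_{-1}$ and the parameters $\gamma_k^i,\theta_k$. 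The third step is the Fr\'echet-derivative estimate, the analogue of Lemmas \ref{lemma2} and \ref{lemma4}. Setting $\chi := G'(u_{k-1};e_k) + \sum_{i=1}^m (\sum_{j\ge i}\gamma_k^j)\,G'(u_{k-i-1};e_{k-i})$ — the linear combination matching the weights produced by the update — I would use \eqref{eqn:dgbd} with the bounds of step two to obtain $\|\nabla\chi\| = \mathcal{O}(S_k^2)$, and \eqref{eqn:fdg} together with \eqref{eqn:errbd} to obtain $\|\nabla(y_{k+1}-\chi)\| = \mathcal{O}(S_k^3)$, exactly as in \eqref{eqn:yk3}--\eqref{eqn:yk4}. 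The identity $y_{k+1} = \tilde e_{k+1} + \sum_{i=1}^m (\sum_{j\ge i}\gamma_k^j)\,\tilde e_{k-i+1}$, read off from the update rule, is what lets me express $y_{k+1}-\chi$ as a weighted sum of the remainders $\psi_{k-i} := G(u_{k-i})-G(u_{k-i-1})-G'(u_{k-i-1};e_{k-i})$.

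Finally I would form the governing equation for $u_k$ by adding the weighted copies of \eqref{eqn:newt} (weight $1-\sum\gamma_k^i$ at $j=k-1$ and $\gamma_k^i$ at $j=k-i-1$), subtract it from \eqref{eqn:newt} at $j=k$ to produce an equation for $y_{k+1}$, and test with $v=\chi$. Skew-symmetry \eqref{bss} removes the leading quadratic term, the polarization identity isolates $\|\nabla y_{k+1}\|^2$, and after dividing by $\tfrac{\nu}{2}(1-\nu^{-1}MR)$ and dropping the $\|\nabla\chi\|^2$ term one is left with $\|\nabla y_{k+1}\|^2 \lesssim \|\nabla(y_{k+1}-\chi)\|^2 + (\text{products of } S_k \text{ with } \|\nabla\chi\|)$. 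Substituting $\|\nabla\chi\| = \mathcal{O}(S_k^2)$ and $\|\nabla(y_{k+1}-\chi)\| = \mathcal{O}(S_k^3)$ yields $\|\nabla y_{k+1}\|^2 = \mathcal{O}(S_k^3)$, which is the claimed $\tfrac32$-order bound.

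The main obstacle is the combinatorial bookkeeping for arbitrary $m$: assembling the weighted combination of \eqref{eqn:newt} and tracking every trilinear term it generates (the $m=2$ display already has ten), and — the genuinely new difficulty — extracting individual bounds on the $m$ coefficients $\gamma_k^i$ from the least-squares normal equations, since the tidy Cramer's-rule formulas available for $m=1,2$ become unwieldy. This is presumably why the paper omits the full computation; a cleaner route is to control only the aggregate projection $\sum_i \gamma_k^i(y_k-y_{k-i})$ in norm (which step one delivers directly) and to arrange the final testing so that this aggregate, rather than the separate coefficients, is what enters the estimate.
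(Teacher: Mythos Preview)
The paper does not actually prove this theorem; it states that the analysis ``would be similar to the previous two subsections but involving more complicate constants'' and omits it. Your proposal is precisely that generalization --- the four-step architecture (optimization analysis, error-by-residual bounds, Fr\'echet-derivative estimates, testing with $\chi$), the correct weights $\sum_{j\ge i}\gamma_k^j$ in both $\chi$ and the telescoping identity for $y_{k+1}$, and the observation that only the aggregate projection needs to be controlled --- so it is exactly the paper's intended approach.
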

This theorem tells the general Anderson accelerated Newton's method for solving NSE converges superlinearly if the initial guess is good enough, and large depth algorithm converges slower than small depth algorithm. 
\section{Numerical tests}
In this section, we test two benchmark problems to verify the superlinearly convergence of Anderson accelerated Newton's method for solving steady Navier-Stokes equation. 
\subsection{2D cavity problem}
The 2D driven cavity uses a domain $\Omega = [0, 1]^2$, with no slip boundary conditions on the sides and
bottom, and a `moving lid' on the top which is implemented by enforcing the Dirichlet boundary condition
$u(x, 1) = (1, 0)^T$, no forcing ($f = 0$). We discretize with $(P_2, P_1)$ Taylor-Hood elements on a $1/64$ mesh that provides $44,365$ total degrees of freedom, and for the initial guess we used the Picard solution after 3 iterations on the same mesh with the same finite element setting. Newton's method and Anderson accelerated Newton's methods with several depths are tested with tolerance $1e-13$.

Streamline plots at $Re = 2500,\ 5000$ obtained by Anderson accelerated Newton's method with depth $m =5$ are given in Figure \ref{fig0}, which are in well agreement with literature \cite{BS06}.  In fact, similar plots can be observed for depth $m = 1,2,10$ too, and therefore are omitted here.  
Convergence plot at various Reynolds numbers are shown in Figure \ref{fig1}. 
For problem with $Re = 2500$, Anderson acceleration slows down the convergence speed to Newton's method, but is still faster than linearly convergent solvers, say Picard iteration, or Anderson accelerated Picard method \cite{PRX19}. For $Re=5000$, we observe that Anderson acceleration enlarges the domain of convergence for Newton's method, which implies Anderson acceleration Newton's method is worth to use in practice, especially for problems with small domain of convergence.  For both cases $Re =2500, 5000$, we find that large depth Anderson accelerated Newton's method converges slower than small depth algorithm, that matches our analytical results. 
In addition,  Table \ref{NAArate2d} shows that Anderson accelerated Newton's method $m=1$ converges superlinearly with order close to 1.5, whereas large depth decelerates the convergence speed for Anderson accelerated Newton's method, which matches our theoretical results from Theorem \ref{thm:aa1}, Theorem \ref{thm:aa2} and Theorem \ref{thm:aa3}.
%
\begin{figure}[h!]
\center 
\includegraphics[trim=100 40 20 30,clip,width=.32\textwidth]{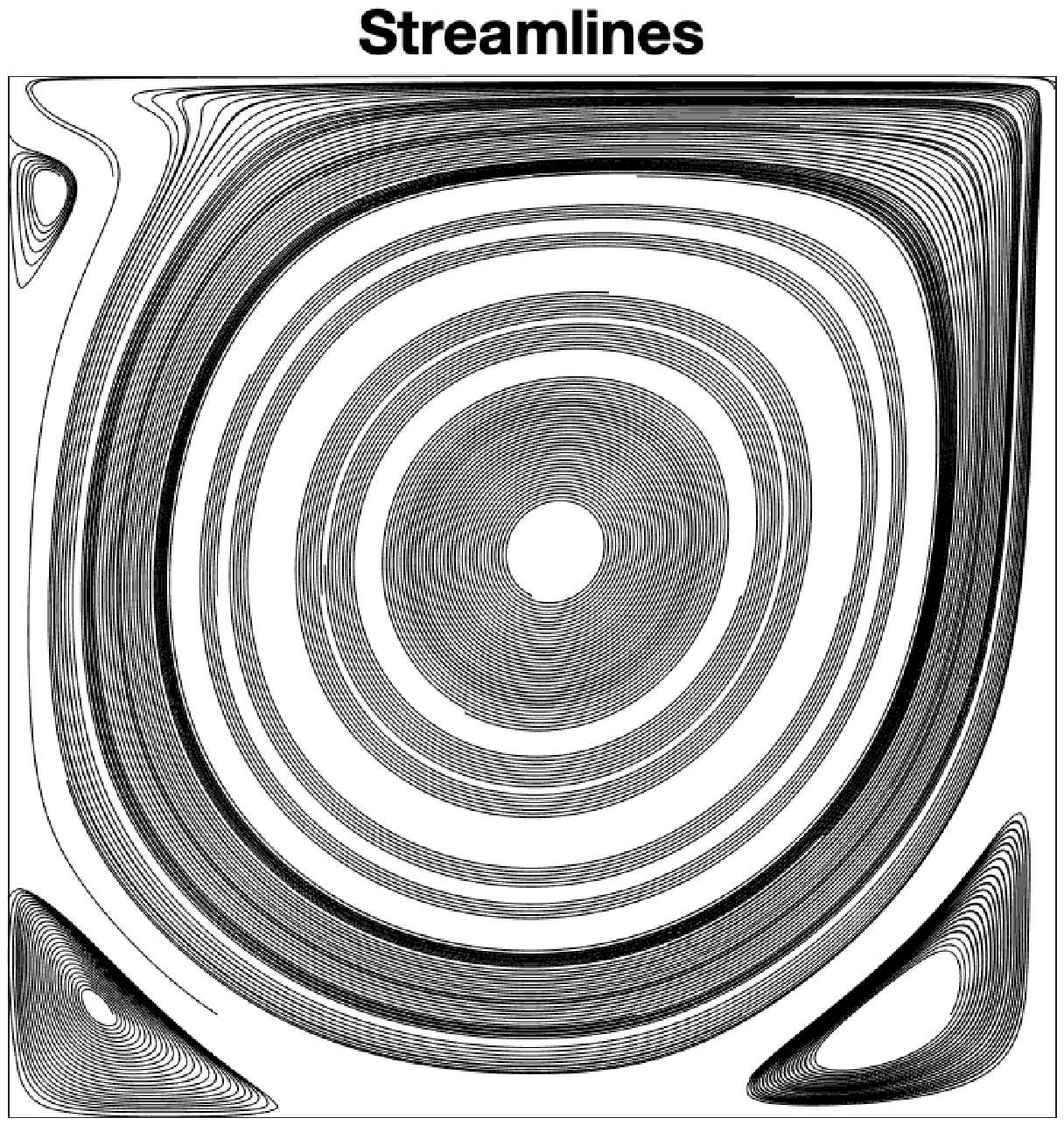}
\includegraphics[trim=100 40 20 30,clip,width=.32\textwidth]{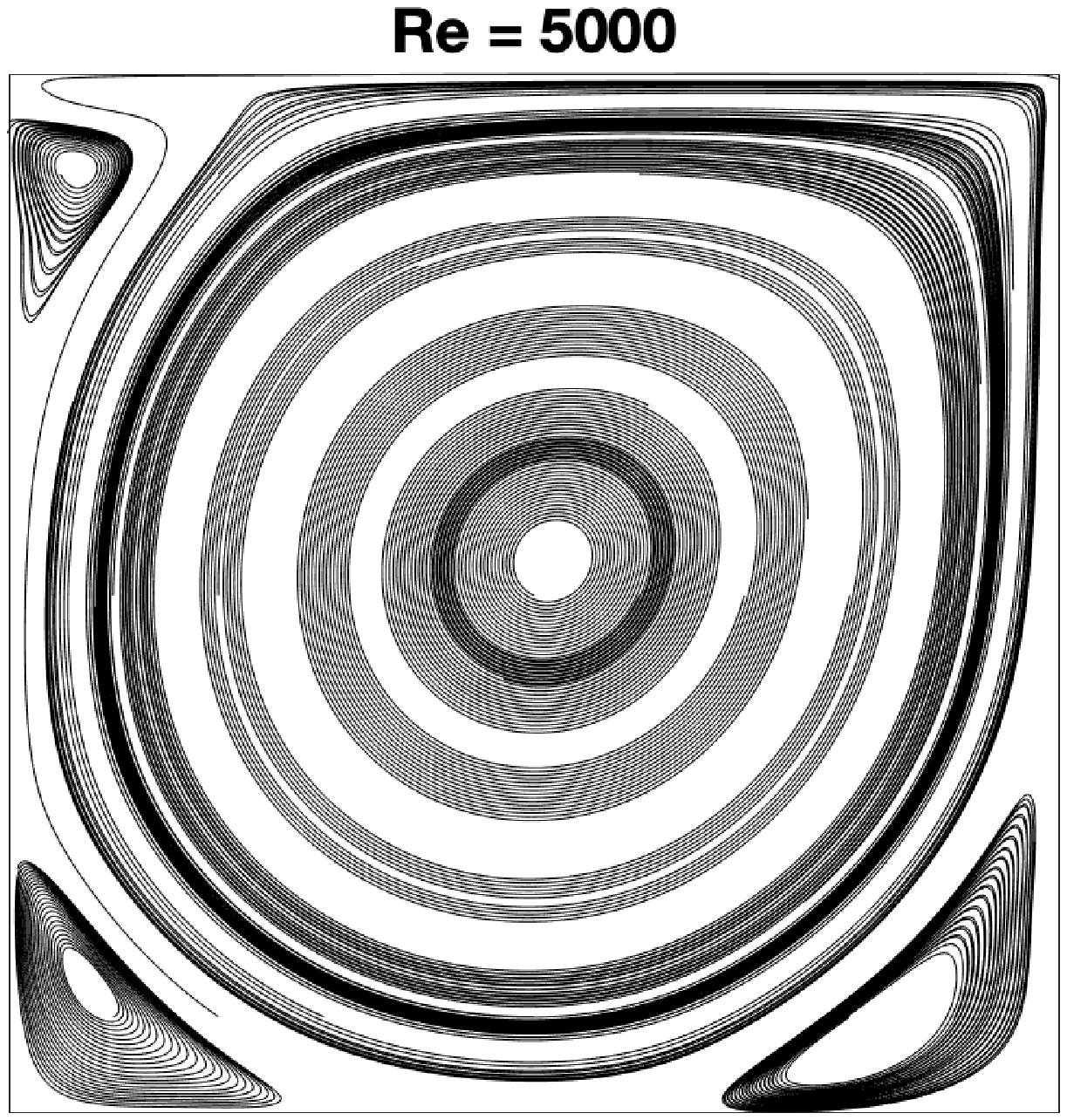}
\caption{\label{fig0} Shown above are streamline plots of the solutions from Anderson accelerated Newton solvers at Reynolds number $Re = 2500$ (left), 5000 (right). }
\end{figure}
\begin{figure}[h!]
\center
\includegraphics[trim=0 40 0 30,clip,width=.39\textwidth]{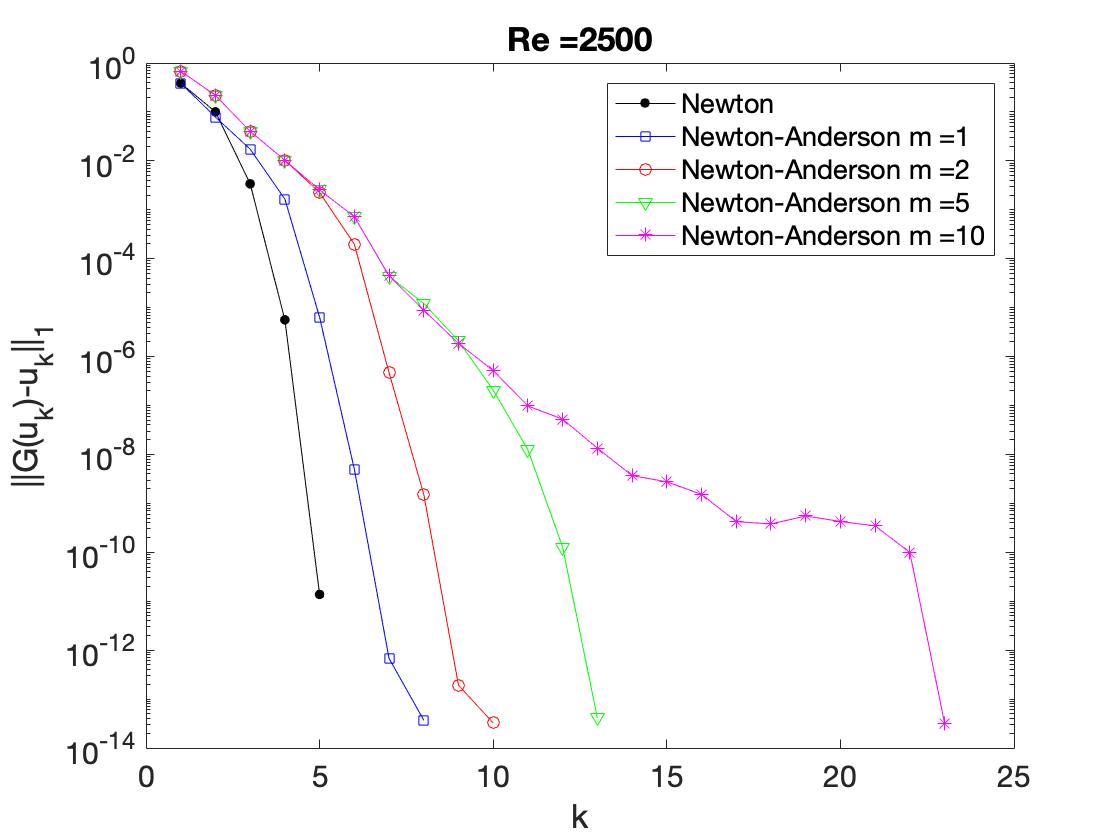}
\includegraphics[trim=0 40 0 30,clip,width=.39\textwidth]{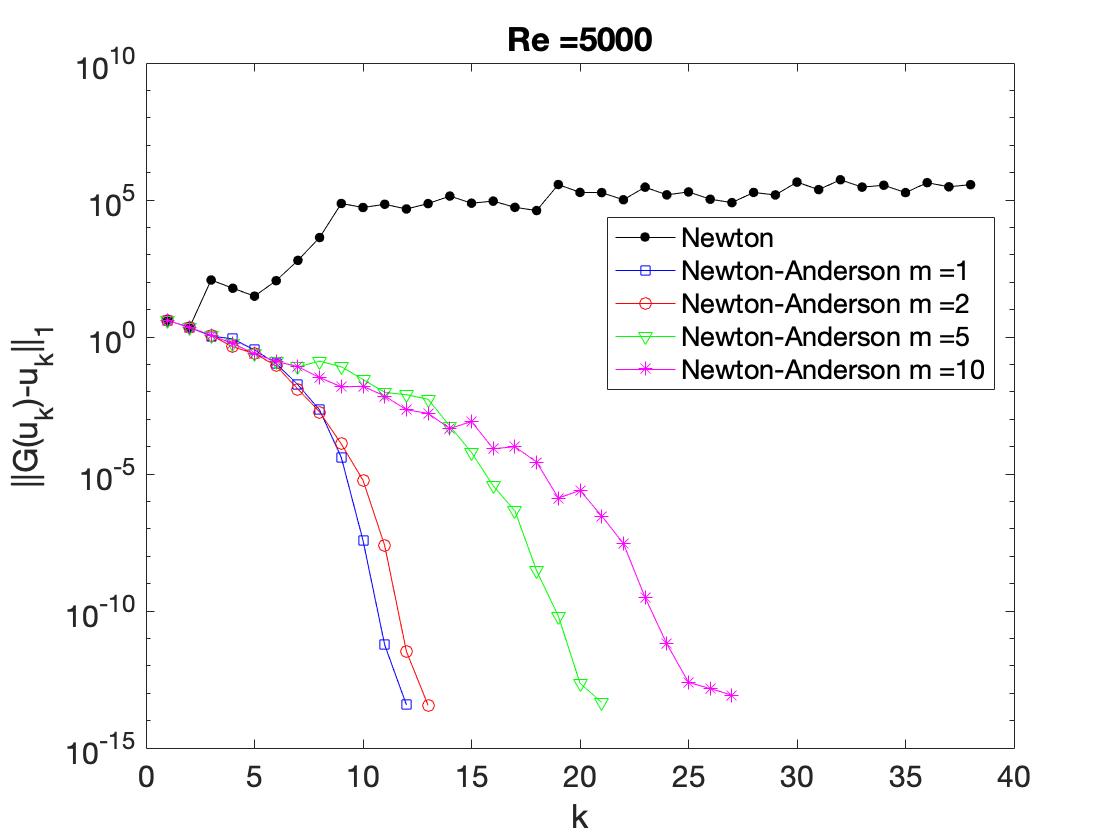}
\caption{\label{fig1} Shown above are convergence plots of Anderson accelerated Newton's method with various depth $m$  for the 2D cavity at Reynolds number $Re = 2500$ (left), 5000 (right). }
\end{figure}
\begin{table}[h!]
\begin{center}
\begin{tabular}{|c|ccccc|}
\hline
conv. order & Newton & AAN $m=1$ & AAN $m=2$ & AAN $m=5$ & AAN $m=10$ \\ \hline
$Re = 2500$ &2.0192&   1.2662 &   1.2519&   1.3072&   1.3936\\
$Re = 5000$ &  Fail & 1.2519 &  1.3936 &1.0203&  0.82909\\ \hline
\end{tabular}
\end{center}
\caption{\label{NAArate2d} This table summarizes the median of convergence order for Newton's method, Anderson accelerated Newton's method (AAN) for different $Re = 2500,5000$ in the 2D cavity problem.}
\end{table}

\subsection{3D cavity problem}
In this subsection, we test Anderson accelerated Newton's method on a 3D lid driven cavity problem with Reynolds number $Re =400, \ 1000$. 
We use a domain $\Omega = [0, 1]^3$, with no slip boundary conditions on all walls, and a unite `moving lid' $u = (1,0,0)^T$ on the top, no forcing ($f = 0$). We discretize with $(P_3, P_2^{disc})$ Scott-Vogelius elements on a barycenter refined uniform mesh that provides $206,874$ total degrees of freedom, and  use zero interior initial guess but satisfying the boundary conditions. Newton's method and Anderson accelerated Newton's methods with several depths are tested with tolerance $1e-6$.

We solve the saddle point linear systems that arise at each iteration via method in \cite{BO06}. Decompose the coefficient matrix via a LU block factorization
\begin{align*}
\begin{pmatrix}
A_k & B\\ B^T & 0 
\end{pmatrix} \begin{pmatrix}
U_k \\ P_k
\end{pmatrix}
 = \begin{pmatrix}
 A_k & 0 \\ B^T & - B^T A_k^{-1}B
 \end{pmatrix} \begin{pmatrix}
 I & A_k^{-1}B^T \\ 0&I
 \end{pmatrix}  \begin{pmatrix}
 U_k \\ P_k
 \end{pmatrix}= \begin{pmatrix}
 F \\ G
 \end{pmatrix}.
\end{align*}
This leads to two solves of a smaller size linear system with coefficient matrix $A_k$ and one solve of a linear system with coefficient matrix to be the Schur complement $B^TA_k^{-1}B$. Direct solver and BICGSTAB with tolerance $1e-10$ and preconditioner pressure mass matrix were used to solve these two types linear systems respectively.

 Plots of centerline $x-$velocity and centerplane slices obtained from Anderson accelerated Newton's method
with $m =1$ are given in Figure \ref{fig2}, which matches well with \cite{WB02}. In fact, similar plots can be observed from Anderson accelerated Newton's method with other depth provided the method converges and are omitted.
\begin{figure}[h!]
\center
\includegraphics[trim=0 0 10 10,clip,width=.29\textwidth]{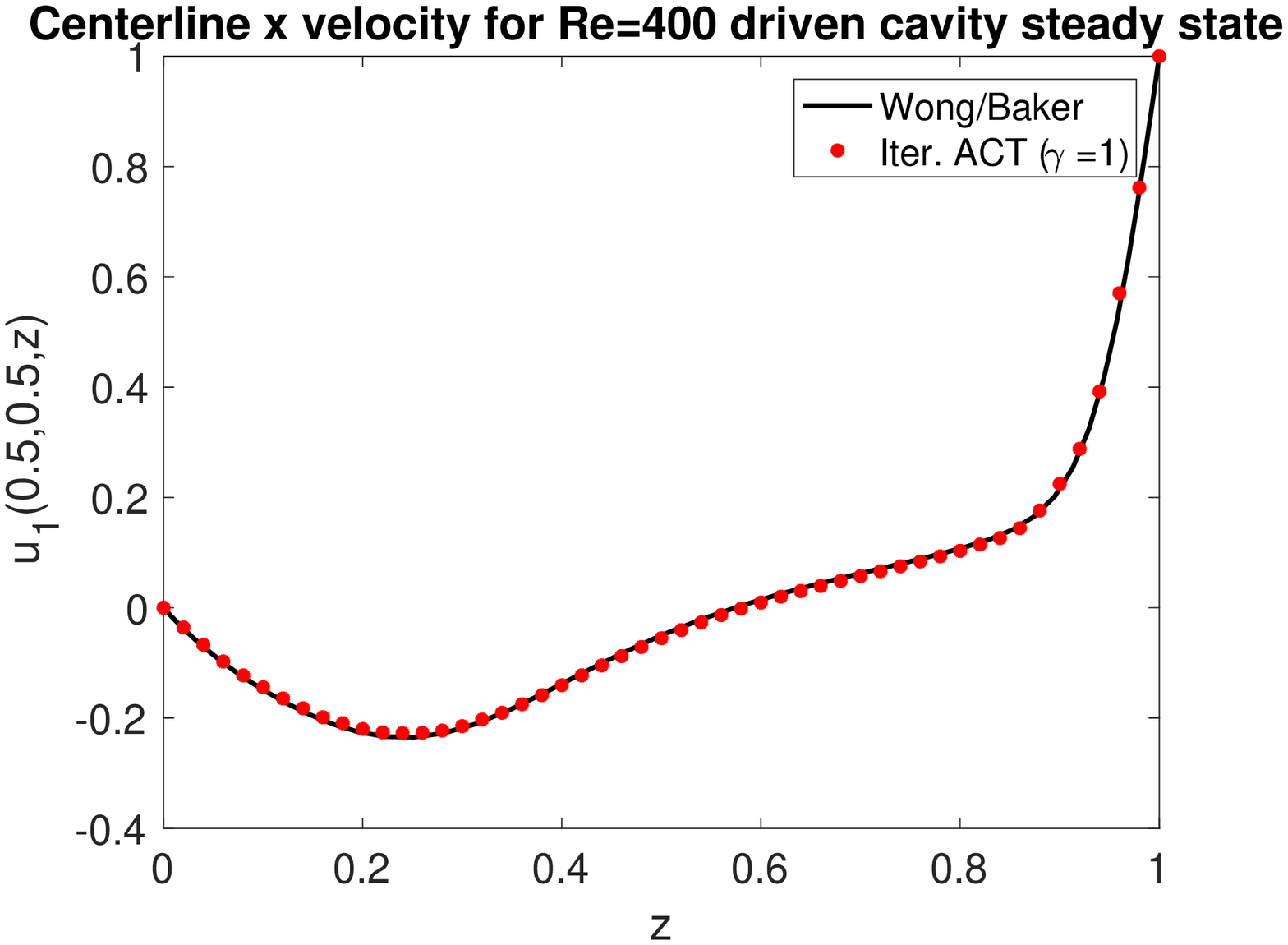}
\includegraphics[trim=50 130 40 120,clip,width=.69\textwidth]{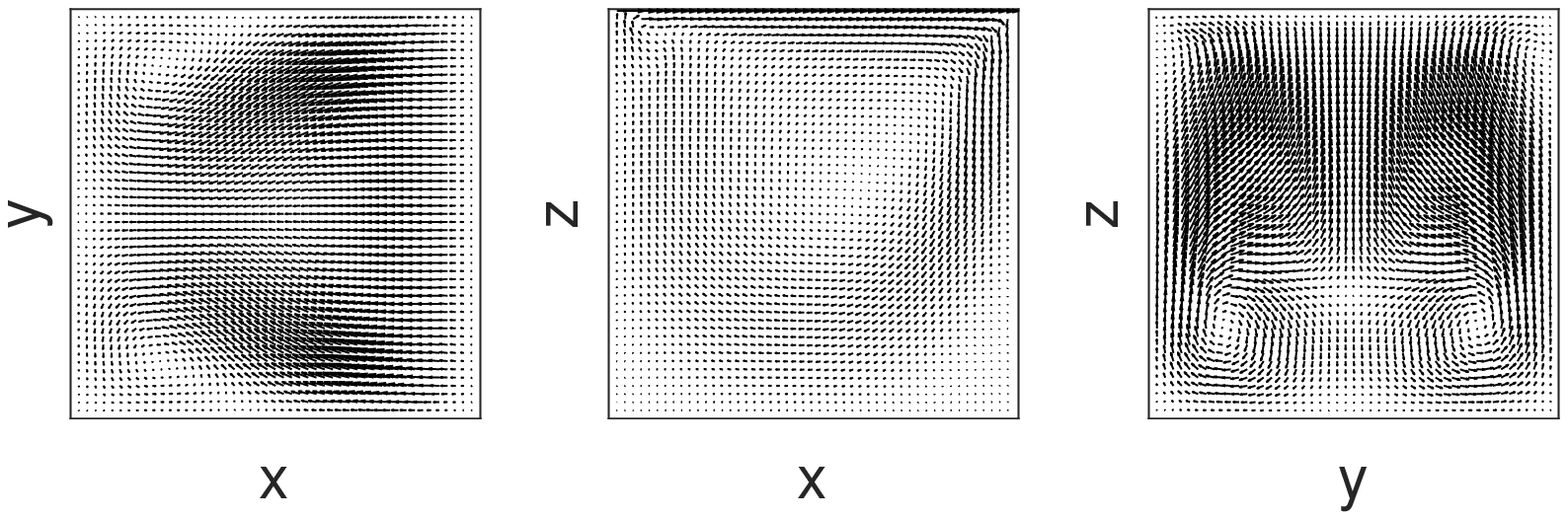}\\
\includegraphics[trim=0 0 10 10,clip,width=.29\textwidth]{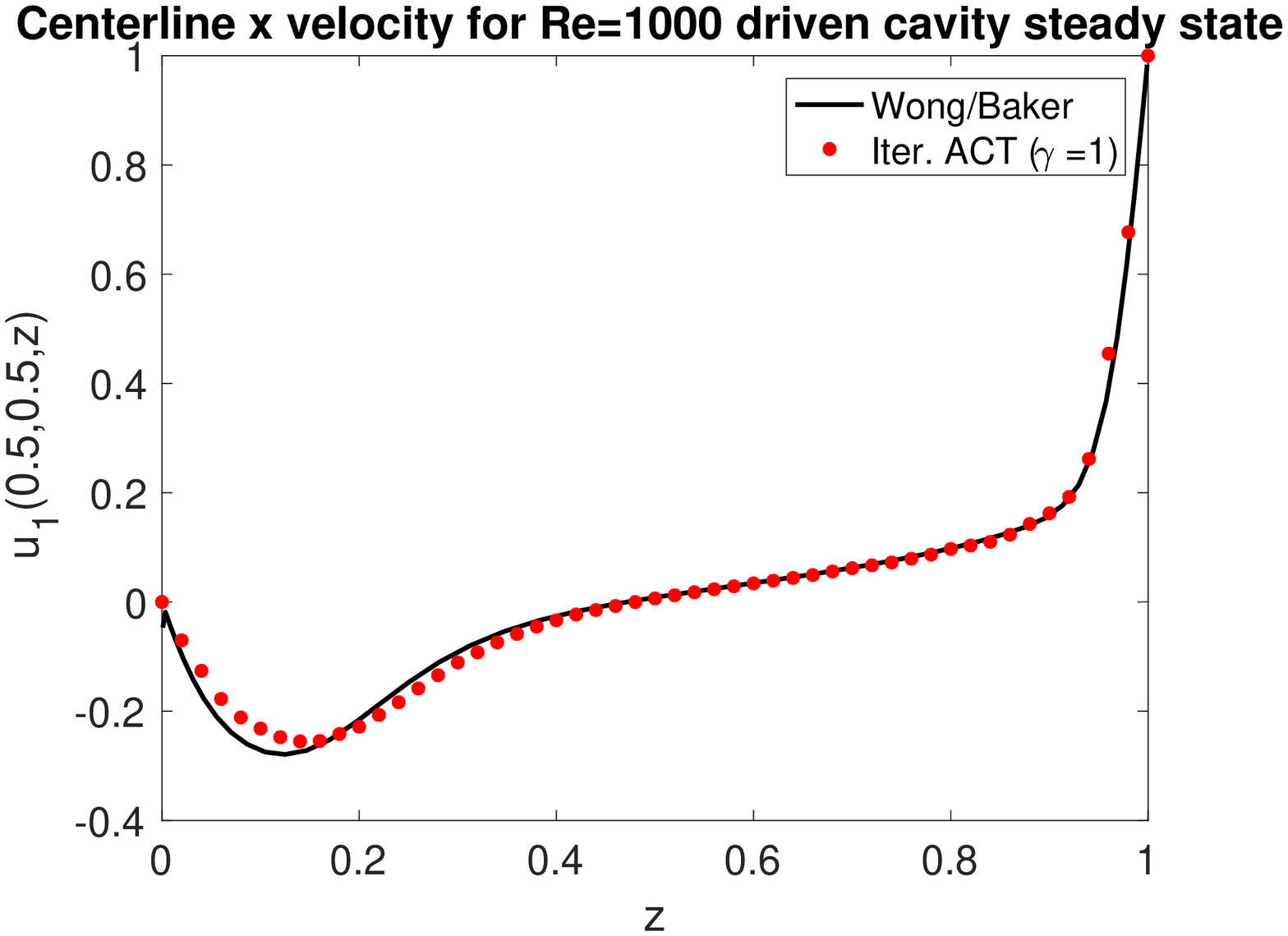}
\includegraphics[trim=50 130 40 120,clip,width=.69\textwidth]{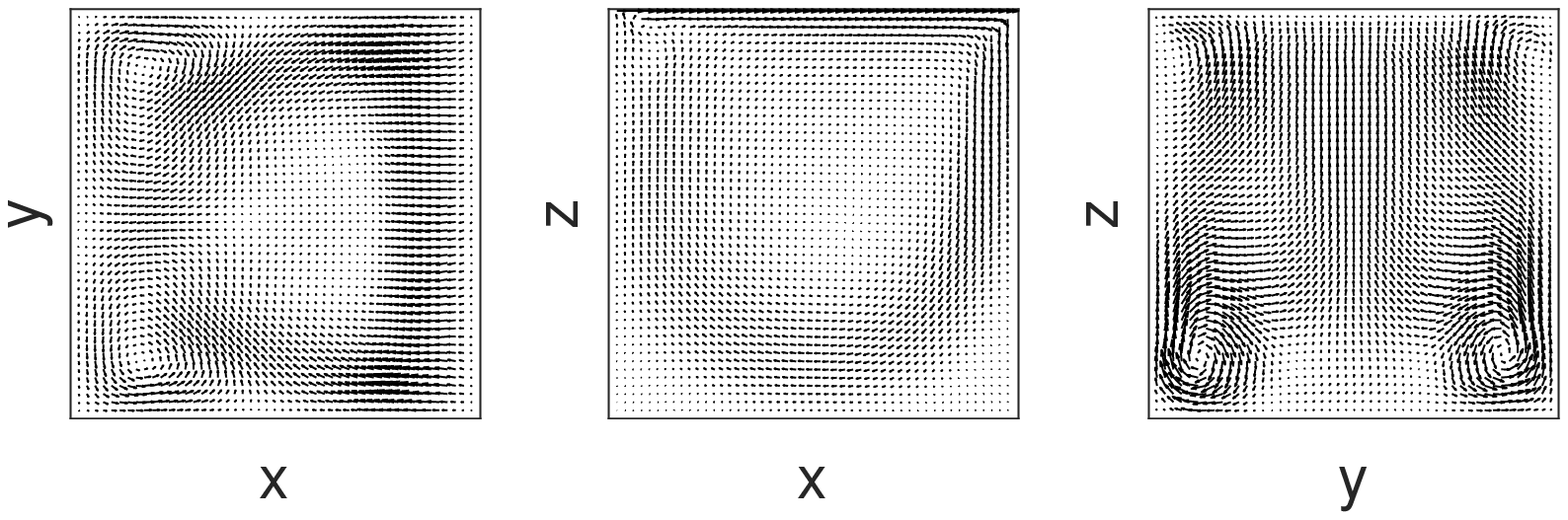}
\caption{\label{fig2} Shown above are the centerline $x-$velocity and centerplane slices for 3D cavity problem with $Re =400$ (top), $1000$ (bottom), obtained from Anderson accelerated Newton's method with depth $m=1$. }
\end{figure}
Convergence plots for 3D cavity problem with $Re = 400, 1000$ are given in Figure \ref{fig4}. We observe superlinear convergence for Anderson accelerated Newton's method when $Re = 400$, which matches our analytical results well. However, when $Re =1000$ we find that Anderson accelerated Newton's method with depth $m =2,5,10$ fail due to smaller domain of convergence while $m=1$ converges superlinearly.  A safeguard strategy would be using Picard iteration or Anderson accelerated Picard iteration first and then switch to Newton's method with or without Anderson acceleration when residual is small enough.
\begin{figure}[h!]
\center
\includegraphics[trim=0 0 10 30,clip,width=.39\textwidth]{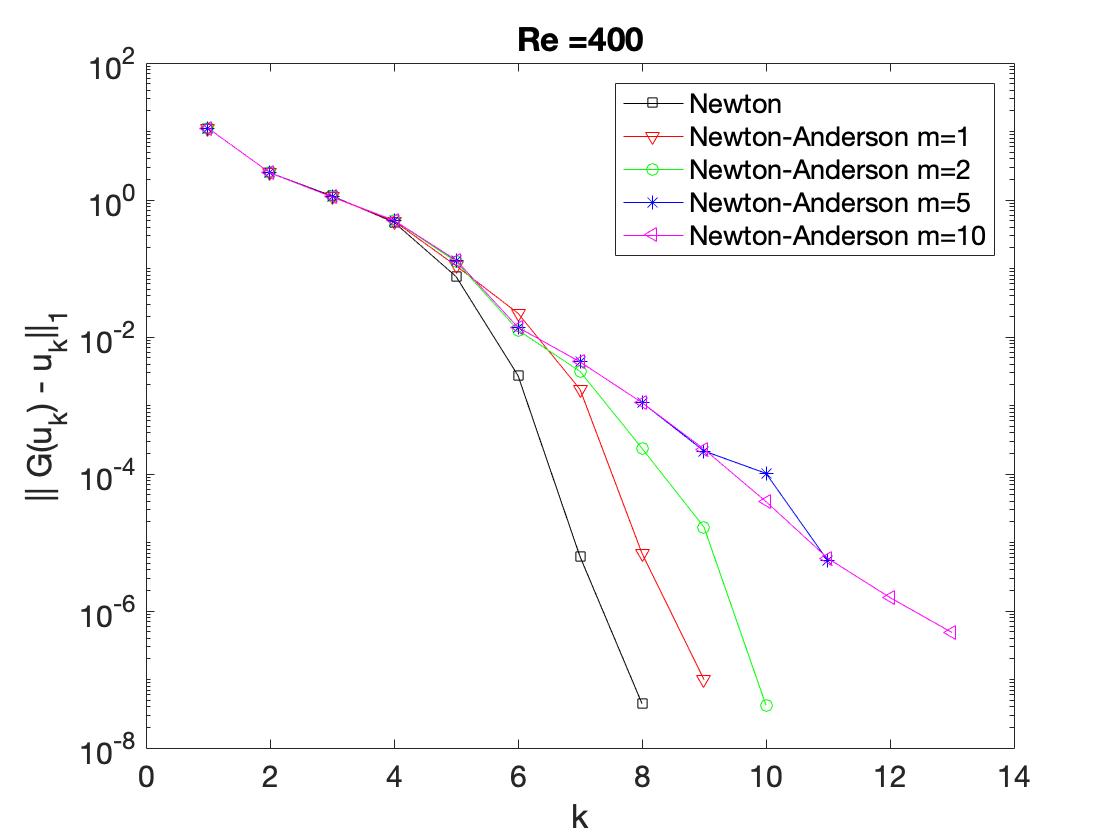}
\includegraphics[trim=0 0 10 30,clip,width=.39\textwidth]{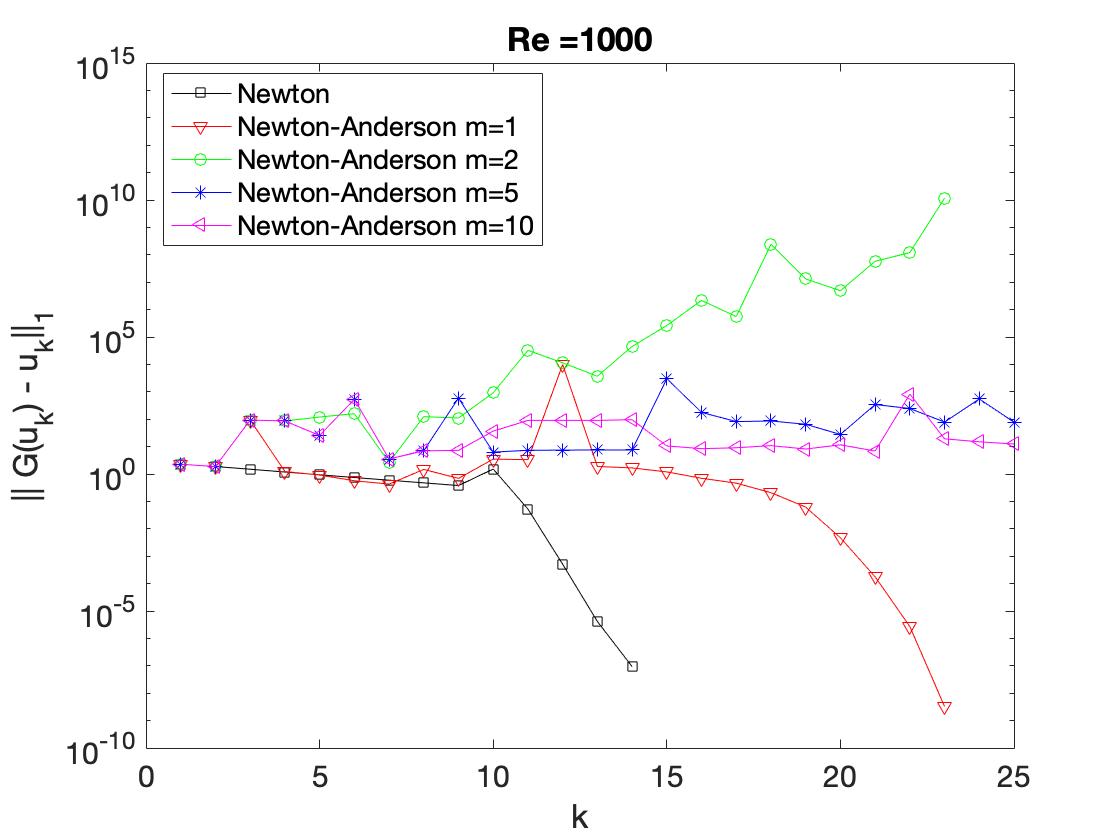}
\caption{\label{fig4} Shown above are convergence plot of Anderson accelerated Newton's method at $Re = 400$ (left), 1000 (right).} 
\end{figure}

\section{Conclusions}
In this paper, we have studied the performance of Anderson acceleration to Newton's method for solving steady Navier-Stokes equations. We find that Anderson accelerated Newton's method  with a good initial guess converges superlinearly
, which is slower than the usual Newton's method. Moreover, Anderson acceleration with large depth decelerates the convergence speed comparing to the one with small depth. The numerical tests confirm our analytical results. In addition, we observe that Anderson acceleration sometimes enlarges the domain of convergence from the 2D cavity experiment with $Re = 5000$, but sometimes narrow the domain of convergence from the 3D cavity experiment with $Re = 1000$, this phenomenon is unexplained and will be studied in the near future.

\bibliographystyle{plain}
\bibliography{graddiv}

\end{document}